\newcommand{\mypagesize}{
\textwidth= 6.25in
\textheight=8.75in
\voffset-.5in
\hoffset-.75in
\marginparwidth=56pt
}
\newtheorem*{thm-plain}{Theorem}
\newtheorem{thm}{Theorem}[section]
\newtheorem{lem}[thm]{Lemma}
\newtheorem{prp}[thm]{Proposition}
\newtheorem{cor}[thm]{Corollary}
\newtheorem{ques}[thm]{Question}
\numberwithin{equation}{thm}
\theoremstyle{definition}
\newtheorem{dfn}[thm]{Definition}
\newtheorem*{dfn-plain}{Definition}
\theoremstyle{remark}
\newtheorem{ntn}[thm]{Notation}
\newtheorem{rem}[thm]{Remark}
\newtheorem*{rem-plain}{Remark}
\DeclareMathOperator{\sing}{sing}
\DeclareMathOperator{\reg}{reg}
\DeclareMathOperator{\Spec}{Spec}
\DeclareMathOperator{\Pic}{Pic}
\DeclareMathOperator{\codim}{codim}
\DeclareMathOperator{\Exc}{Exc}
\DeclareMathOperator{\Bs}{Bs}
\def\rd#1.{\lfloor{#1}\rfloor}
\def\rp#1.{\lceil{#1}\rceil}
\newcommand{\N}{\mathbb N}
\newcommand{\Q}{\ensuremath{\mathbb Q}}
\newcommand{\C}{\mathbb C}
\renewcommand{\P}{\mathbb P}
\renewcommand{\O}{\mathscr O}
\newcommand{\x}{\times}
\renewcommand{\phi}{\varphi}
\renewcommand{\theta}{\vartheta}
\newcommand{\id}{\mathrm{id}}
\newcommand{\minus}{\setminus}%{\backslash}
\newcommand{\inj}{\hookrightarrow}
\newcommand{\surj}{\twoheadrightarrow}
\newcommand{\isom}{\simeq}
\newcommand{\tensor}{\otimes}
\DeclareMathOperator{\sHom}{\mathscr H\!om}
\newcommand{\red}{\mathrm{red}}
\newcommand{\Sym}{\mathrm{Sym}}
\newcommand{\an}{\mathrm{an}}
\DeclareMathOperator{\pr}{pr}
\newcommand{\db}{Du~Bois\xspace}
\newcommand{\pdb}{potentially Du~Bois\xspace}
\newcommand{\mmp}{minimal model program\xspace}
\newcommand{\sI}{\mathscr{I}}
\newcommand{\sL}{\mathscr{L}}
\newcommand{\sO}{\mathscr{O}}
\newcommand{\sT}{\mathscr{T}}
\DeclareMathOperator{\supp}{supp}
\newcommand{\eps}{\varepsilon}
\newcommand{\wt}{\widetilde}
\newcommand{\wh}{\widehat}
\newcommand{\kdot}{{{\,\begin{picture}(1,1)(-1,-2)\circle*{2}\end{picture}\ }}}
\newenvironment{sequation}{%
\setcounter{equation}{\value{thm}}%
\numberwithin{equation}{section}%
\begin{equation}%
}{%
\end{equation}%
\numberwithin{equation}{thm}%
\addtocounter{thm}{1}%
}
\numberwithin{equation}{thm}
\DeclareRobustCommand{\SkipTocEntry}[5]{}
\newcommand{\iref}[3]{\the\value{#1}.\the\value{#2}(\the\value{#3})}
\newcommand\factor[2]{\left. \raise 2pt\hbox{$#1$} \right/\hskip -2pt \raise -2pt\hbox{$#2$}}
\definecolor{forrest}{RGB}{81,133,49}
\definecolor{mydarkblue}{RGB}{10,92,153}
\begin{document}

\title{Potentially Du~Bois spaces}
\author{Patrick Graf} %
\author{S\'andor J Kov\'acs} %
\address{PG: Lehrstuhl f\"ur Mathematik I, Universit\"at Bay\-reuth,
  95440 Bayreuth, Germany} %
\email{\href{mailto:patrick.graf@uni-bayreuth.de}{patrick.graf@uni-bayreuth.de}}
\address{SJK: University of Washington, Department of Mathematics, Box 354350,
  Seattle, WA 98195-4350, USA} %
\email{\href{mailto:skovacs@uw.edu}{skovacs@uw.edu}}
\date{\today} %
\thanks{The first named author was partially supported by the DFG-Forschergruppe 790
  ``Classification of Algebraic Surfaces and Compact Complex Manifolds''.}  %
\thanks{The second named author was supported in part by NSF Grant DMS-1301888, a
  Simons Fellowship (\#304043), and the Craig McKibben and Sarah Merner Endowed
  Professorship in Mathematics at the University of Washington. This work was
  partially completed while he enjoyed the hospitality of the Institute for Advanced
  Study (Princeton) supported by The Wolfensohn Fund.} %
\keywords{Singularities of the minimal model program, Du~Bois pairs, differential
  forms, Lipman-Zariski conjecture} %
\subjclass[2010]{14B05, 32S05}

\begin{abstract}
  We investigate properties of \emph{\pdb} singularities, that is, those that occur
  on the underlying space of a \db pair.
  We show that a normal variety $X$ with \pdb singularities and Cartier canonical
  divisor $K_X$ is necessarily log canonical, and hence \db. As an immediate
  corollary, we obtain the Lipman-Zariski conjecture for varieties with \pdb
  singularities.

  We also show that for a normal surface singularity, the notions of \db and \pdb
  singularities coincide.  In contrast, we give an example showing that in dimension
  at least three, a normal \pdb singularity $x \in X$ need not be \db even if one
  assumes the canonical divisor $K_X$ to be $\Q$-Cartier.
\end{abstract}

\maketitle

%\tableofcontents

\section{Introduction}

Hodge theory of complex projective manifolds has proven to be an extremely
useful tool in many different and perhaps a priori unexpected situations. For
instance the simple consequence of the degeneration of the Hodge-to-de Rham spectral
sequence that the natural map
\begin{sequation}
  \label{eq:9}
  H^i(X^{\an}, \C)\to H^i(X^{\an}, \sO_{X^{\an}})
\end{sequation}%
is surjective has many applications. It was discovered early on that this
surjectivity continues to hold for normal crossing singularities and even some more
complicated singularities. Steenbrink identified the class of singularities that has
this property naturally and named them \emph{\db singularities} \cite{Steenbrink83}.
It turns out that (\ref{eq:9}) along with the requirement that general hyperplane
sections of \db singularities should also be \db essentially characterize \db
singularities \cite{MR2987664}.

Unfortunately, the rigorous definition of \db singularities is complicated. It relies
on a generalization of the de~Rham complex, the Deligne-\db complex (see
\cite[6.4]{SingBook}), an object in the derived category of coherent sheaves on $X$. However,
once the technical difficulties are settled the theory is very powerful.

One possible way to tame \db singularities is to consider this notion a weakening of
the notion of rational singularities. In fact, Steenbrink conjectured immediately
after introducing the notion that rational singularities are \db and this was
confirmed in \cite{Kov99}.

Originally, \db singularities were introduced to study degenerations of variations of
Hodge structures, but Koll\'ar noticed that there is a strong connection between them
and the singularities of the \mmp. In particular, he conjectured that log canonical
singularities are \db. This was recently confirmed in \cite{KK10}.
For the definition of the singularities of the \mmp, such as log canonical and klt,
please see~\cite[2.34]{KM98} or~\cite[2.8]{SingBook}.

The evolution of the \mmp taught us that singularities should be studied in pairs,
that is, instead of considering a single space $X$ one should consider a pair
consisting of a variety and a subvariety. This has also proved to be a powerful
generalization.

The notion of \db singularities was recently generalized for pairs $(X, \Sigma)$
consisting of a complex variety $X$ and a closed subscheme $\Sigma \subset X$ \cite{Kov11}. For a relatively detailed treatment the reader should peruse
\cite[Chapter 6]{SingBook}.

Recently \db singularities and \db pairs have provided useful tools in many
situations. For instance, they form a natural class of singularities where Kodaira
type vanishing theorems hold \cite{Ste85,KSS10,Kov11,Kov13,Pat13}. Other recent
applications include extension theorems~\cite{GKKP11}, positivity
theorems~\cite{Sch12}, categorical resolutions \cite{MR2981713}, log canonical
compactifications~\cite{HX13}, semi-positivity \cite{FSS13}, and injectivity theorems
\cite{Fujino_IT}. Besides applications in the \mmp, \db singularities play an
important role in moduli theory as well \cite{KK10, Kov13}.  

The introduction of \db pairs was motivated by the success of studying singularities
of pairs in the \mmp. If a pair $(X,\Delta)$ is log canonical (resp.\ klt) and
$\Delta$ is $\mathbb Q$-Cartier, then $X$ is also log canonical (resp.\ klt). An
analogous statement is not straightforward for \db pairs.  The fact that $(X,\Sigma)$
is a \db pair does not clearly imply that then $X$ itself is \db. In fact, one of the
advantages of \db pairs is that they provide a possibility to extend the power of
Hodge theoretic techniques to a larger class of varieties. In other words, it is
natural to ask and potentially helpful to know the answer to the following question.

\begin{ques}\label{ques:pdb} 
  Given a complex variety $X$, when does there exist a subvariety $\Sigma \subsetneq
  X$ such that $(X, \Sigma)$ is a \db pair?
\end{ques}

\noindent
To make it easier to discuss these singularities, we introduce the following
definition:

\begin{dfn}\label{dfn:pdb}
  Let $X$ be a complex variety and $x\in X$ a closed point. We say that $X$ is
  \emph{\pdb at $x$} if there exists a Zariski-open set $U\subseteq X$ containing $x$
  and a subvariety $\Sigma_U\subseteq U$ not containing any irreducible components of
  $U$ such that $(U,\Sigma_U)$ is a \db pair (see Section~\ref{sec:pair of DB} for
  the definition of \db pairs). $X$ is called \emph{\pdb} if it is \pdb at $x$ for
  every closed point $x\in X$.
\end{dfn}

\noindent
The main result of this paper gives some answers to Question~\ref{ques:pdb}. 

\begin{thm} \label{thm:pot DB} Let $X$ be a normal complex variety. Then:
\begin{enumerate}
\item\label{itm:pot DB.1} If $X$ has \pdb singularities, then it is \db in
  codimension two, i.e.~the non-Du Bois locus of $X$ has codimension at least three.
\item\label{itm:pot DB.2} Let $(X,\Sigma)$ be a \db pair and $\Delta\subset X$ a
  reduced effective divisor such that $\supp \Delta\subseteq \Sigma$ and $K_X+\Delta$
  is Cartier.  Then $(X,\Delta)$ is log canonical and hence $X$ is \db. In
  particular, if $X$ has \pdb singularities and $K_X$ is Cartier, then it is log
  canonical and \db.
\item\label{itm:pot DB.3}%
  There exists a three-dimensional normal variety $X$ with isolated singularities and
  \Q-Cartier canonical divisor such that $X$ has \pdb singularities, but it is not
  \db. In particular, the bound on the codimension in (\ref{thm:pot DB}.\ref{itm:pot
    DB.1}) is sharp.
\end{enumerate}
\end{thm}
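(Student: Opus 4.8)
\emph{Strategy.} The three statements are largely independent. For part (1) I would cut down to a surface by general hyperplane sections and invoke the surface case. For part (2), which is the heart of the matter, I would assume $(X,\Delta)$ is not log canonical, use that $K_X+\Delta$ is \emph{Cartier} (not merely $\Q$-Cartier) to force an integral discrepancy $\le -2$, and contradict the \db-pair hypothesis via Grothendieck duality. For part (3) I would write down an explicit affine cone.

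\emph{Part (1).} Since $X$ is normal it is smooth, hence \db, in codimension $\le 1$, so only codimension two is at issue. I would use two ingredients: (a) both being \db and being \pdb are preserved by general hyperplane sections --- if $(U,\Sigma_U)$ is a \db pair then so is $(U\cap H,\Sigma_U\cap H)$ for general $H$, and $\Sigma_U\cap H$ still contains no component of $U\cap H$ --- together with the Bertini-type statement that a general linear section of a normal variety of dimension $\ge 2$ is again normal; and (b) the characterization \cite{MR2987664} that $X$ is \db at a point if and only if a general hyperplane section through that point is. Cutting $\dim X-2$ times, a general two-dimensional linear section $S\subseteq X$ is then a normal surface which is still \pdb, hence \db by the surface case ($\db=\pdb$ for normal surface singularities). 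If the non-\db locus of $X$ had codimension $\le 2$, a general such $S$ would meet it, i.e.\ pass through a non-\db point of $X$, and then $S$ would fail to be \db by (b) --- a contradiction. Hence that locus has codimension $\ge 3$.

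\emph{Part (2).} It suffices to prove that $(X,\Delta)$ is log canonical, since then $X$ is \db by \cite{KK10}. Suppose not, and take a log resolution $\pi\colon Y\to X$ of $(X,\Delta\cup\Sigma)$ with strict transform $\wt\Delta$ of $\Delta$ and exceptional primes $E_i$, so $K_Y+\wt\Delta=\pi^*(K_X+\Delta)+\sum_i a_iE_i$. Because $K_X+\Delta$ is Cartier, $\pi^*(K_X+\Delta)$ is an honest integral pullback, all $a_i$ are integers, and non--log canonicity forces some $E$ with $a_E\le-2$. After localizing I may assume the center of $E$ is the point of interest; and since a \db pair $(X,\Sigma)$ has $X$ \db on $X\setminus\Sigma$, where moreover $\Delta=0$, away from $\Sigma$ we are reduced to the sub-case $\Sigma=\Delta=\emptyset$, so I may as well assume the center lies in $\Sigma$ and hence that $E$ is a component of the reduced preimage $F:=\pi^{-1}(\Sigma)_{\red}$, which also contains $\wt\Delta$. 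Now I would invoke the characterization of \db pairs via a log resolution (\cite[Ch.\ 6]{SingBook}) together with Grothendieck duality for $\pi$: dualizing the \db-pair condition converts a statement about $R\pi_*\sO_Y(-F)$ into one about $R\pi_*\omega_Y(F)$ and the dualizing complex of $X$, and since $K_Y+F=\pi^*(K_X+\Delta)+\sum_i(a_i+1)E_i$ with $a_E+1<0$, the divisor $E$ makes the relevant push-forward strictly ``too small'', giving the contradiction. The step I expect to be the real obstacle is pinning down the precise dual statement and isolating exactly where Cartierness --- as opposed to mere $\Q$-Cartierness --- is used: by part (3) a singularity with a discrepancy strictly between $-2$ and $-1$ can be \pdb, so integrality of the discrepancies, which Cartierness of $K_X+\Delta$ guarantees and $\Q$-Cartierness does not, must enter essentially, and the argument has to reflect that. (Once $(X,\Delta)$ is log canonical, that $X$ itself is \db is again \cite{KK10}.)

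\emph{Part (3).} I would take a curve $C_1$ of genus $2$, a curve $C_2$ of genus $\ge 2$, and a nontrivial torsion line bundle $\eta\in\Pic^0(C_1)$; set $V:=C_1\times C_2$, let $L:=\pr_1^*(\omega_{C_1}\otimes\eta^{-1})\otimes\pr_2^*\omega_{C_2}$ (ample), and let $X$ be the affine cone over $(V,L)$. Then $X$ is a normal threefold with a single singular point, and $K_V\sim_{\Q}L$ while $K_V\not\sim mL$ for any $m$ --- the obstruction being precisely $\eta$ --- so $K_X$ is $\Q$-Cartier but not Cartier, exactly as it must be in view of part (2). By the cohomological criterion for when a cone is \db, $X$ is \db iff $H^i(V,L^m)=0$ for all $i,m>0$; but K\"unneth exhibits the summand $H^0(C_1,\omega_{C_1}\otimes\eta^{-1})\otimes H^1(C_2,\omega_{C_2})\ne0$ of $H^1(V,L)$, so $X$ is not \db. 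To see that $X$ is \pdb, take a general fibre $D=\{p\}\times C_2$ of $\pr_1$ and let $\Sigma\subset X$ be the cone over $D$; since $V$ and $D$ are smooth, $(V,D)$ is a \db pair, and by the cone criterion for \db \emph{pairs} it then suffices to check $H^i(V,L^m(-D))=0$ for all $i,m>0$ --- again a K\"unneth computation, which works precisely because $C_1$ has genus $2$ (so that $\omega_{C_1}\otimes\eta^{-1}(-p)$ and its positive twists carry no higher cohomology). Thus $(X,\Sigma)$ is a \db pair and $X$ is \pdb though not \db; as $X$ has an isolated singularity, this also shows the codimension bound in part (1) is sharp. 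The delicate point here is the simultaneous need of a nonvanishing cohomology group (for non--Du~Bois-ness) and a vanishing family (for the pair), which is what forces the genus-$2$ factor and the torsion twist.
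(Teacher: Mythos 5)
Your part (2) is where the real content lies, and it is also where your proposal stops short of a proof. The paper's argument runs through an extension theorem for reflexive differentials on \db pairs (Theorem~\ref{thm:Ext of p-forms on DB pairs}, which adapts \cite[\S 17]{GKKP11} using the Steenbrink-type vanishing of \cite[5.3]{Kov13}): applied in degree $p=n$ it yields $\pi_*\omega_{\wt X}(\wh D)\simeq\omega_X(\Sigma_{\text{div}})$, whence $\omega_X(\Delta)\subseteq\pi_*\omega_{\wt X}(\wh D)$ and, since $\omega_X(\Delta)$ is a line bundle by the Cartier hypothesis, an injection $\pi^*\omega_X(\Delta)\inj\omega_{\wt X}(\wh D)$, i.e.\ all discrepancies are $\ge -1$. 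Your duality sketch is aimed at exactly this statement, but two things are missing. First, your starting point --- ``the characterization of \db pairs via a log resolution'' as a statement about $R\pi_*\sO_Y(-F)$ --- is not available in general: $\underline\Omega^0_{X,\Sigma}\simeq R\pi_*\sO_Y(-F)$ holds only under extra hypotheses (e.g.\ isolated non-\db locus, as in Theorem~\ref{thm:isol DB criterion}); in general there is only a natural map. Second, the step you yourself flag as ``the real obstacle'' --- identifying the dual statement and extracting the contradiction from $a_E+1<0$ --- is precisely the content of the $p=n$ case of the extension theorem, so nothing has actually been proved. (A duality argument in the style of \cite{KSS10} can be made to work for $p=n$ and would be more economical than the full GKKP induction, but it must be carried out.) In part (1) the skeleton matches the paper (cut to a general complete-intersection surface, use the surface case, propagate back up), but you assume the surface case ``\db $=$ \pdb for normal surface singularities''; that is Proposition~\ref{prp:DB surface pair}, proved via Theorem~\ref{thm:isol DB criterion} and an $R^1\pi_*$ computation, and it is not an off-the-shelf fact. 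Moreover the implication ``general surface section \db $\Rightarrow$ $X$ \db near it'' is not what \cite{MR2987664} provides; the correct tool is \cite[Thm.~4.1]{KS11}.

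Part (3) is a genuinely different and much simpler construction than the paper's, and as far as I can check it is correct: with $V=C_1\times C_2$, $\eta\in\Pic^0(C_1)$ nontrivial torsion, $L=K_V-\pr_1^*\eta$ and $D=\{p\}\times C_2$ for general $p$, the K\"unneth computations do give $H^1(V,L)\neq 0$ while $H^i(V,L^m(-D))=0$ for all $i,m\ge 1$; the only delicate case is $m=1$, where one needs $h^0(\omega_{C_1}\otimes\eta^{-1}(-p))=0$ and $h^1=h^0(\eta(p))=0$, both of which hold for general $p$. If this is right, taking $\eta$ of order $2$ even produces an index-$2$ example, which the paper only conjectures to exist --- so this deserves to be written up carefully. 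Two caveats: you invoke the cone criterion for \db \emph{pairs}, which is Theorem~\ref{thm:cone DB criterion} and must itself be proved (only the case $\Sigma=\emptyset$ is in \cite{Ma13}), and the non-\db claim uses the ``only if'' direction of that criterion. By contrast, the paper realizes $T$ as a general member of a non-free linear system on a ruled surface times a genus-$2$ curve and spends most of its effort verifying smoothness and the required cohomological vanishing; your example avoids all of that.
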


\noindent
We have the following immediate corollary of Theorem~\ref{thm:pot DB}.\ref{itm:pot
  DB.2}.

\begin{cor}[Lipman-Zariski conjecture for \pdb singularities] \label{cor:LZ pdb}
Let $X$ be a complex variety with \pdb singularities. If the tangent sheaf
$\sT_X := \sHom_{\O_X}(\Omega_X^1, \O_X)$ is locally free, then $X$ is smooth.
\end{cor}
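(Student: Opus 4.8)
The plan is to deduce the corollary from Theorem~\ref{thm:pot DB}.\ref{itm:pot DB.2} together with the log canonical case of the Lipman--Zariski conjecture, which is already known. Concretely, I would proceed in four steps: (i) reduce to the case that $X$ is normal; (ii) use the top exterior power of the locally free sheaf $\sT_X$ to show that $K_X$ is Cartier; (iii) apply Theorem~\ref{thm:pot DB}.\ref{itm:pot DB.2} to conclude that $X$ is log canonical (and \db); and (iv) quote the Lipman--Zariski conjecture for log canonical singularities to obtain smoothness.

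For steps (i) and (ii): since smoothness is local, fix a closed point $x\in X$ and work near it. It is well known that a complex variety whose tangent sheaf $\sT_X=\sHom_{\O_X}(\Omega_X^1,\O_X)$ is locally free is normal; hence $X$ is regular in codimension one and $K_X$ is a well-defined Weil divisor. Let $n=\dim X$ and let $U\subseteq X$ be the smooth locus, so that $\codim_X(X\minus U)\ge 2$. Over $U$ there is a canonical isomorphism $\bigl(\bigwedge^n\sT_X\bigr)^\vee|_U\simeq\omega_U\simeq\O_X(K_X)|_U$. Because $\sT_X$ is locally free of rank $n$, the sheaf $\bigl(\bigwedge^n\sT_X\bigr)^\vee$ is a line bundle, while $\O_X(K_X)$ is reflexive; since $X$ is normal and $X\minus U$ has codimension at least two, the isomorphism over $U$ extends to an isomorphism on all of $X$. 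Thus $\O_X(K_X)$ is a line bundle, i.e.\ $K_X$ is Cartier.

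For steps (iii) and (iv): the final assertion of Theorem~\ref{thm:pot DB}.\ref{itm:pot DB.2} now applies --- $X$ has \pdb singularities and $K_X$ is Cartier --- so $X$ is log canonical (and \db). It then suffices to invoke the Lipman--Zariski conjecture for log canonical singularities, a theorem of Graf and Kov\'acs, which says exactly that a log canonical variety with locally free tangent sheaf is smooth. Applying it to $X$ finishes the argument.

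I do not expect any real obstacle within the corollary: step (ii) is routine reflexive-sheaf bookkeeping and step (i) is classical. The substance lies entirely in Theorem~\ref{thm:pot DB}.\ref{itm:pot DB.2}, whose proof is the genuine work, and in the previously established log canonical case of Lipman--Zariski; granting those, the deduction really is immediate.
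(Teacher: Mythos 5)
Your argument is correct and coincides with the paper's second (``alternative'') proof of this corollary: normality via Lipman's theorem \cite{Lip65}, $K_X$ Cartier by extending the isomorphism $\bigl(\bigwedge^n\sT_X\bigr)^\vee\simeq\omega_X$ from the smooth locus, log canonicity from Theorem~\ref{thm:pot DB}.\ref{itm:pot DB.2}, and finally the log canonical case of Lipman--Zariski. One small correction: that last ingredient is Druel's theorem \cite{Dru13}, not a result of Graf--Kov\'acs --- the Graf--Kov\'acs paper \cite{GK13} is instead what powers the paper's \emph{first} proof, which bypasses log canonicity entirely by feeding the reflexivity of $\pi_*\Omega^1_{\wt X}(\log\wh D)$ from Theorem~\ref{thm:Ext of p-forms on DB pairs} directly into \cite[Thm.~1.2]{GK13}.
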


For normal singularities, Theorem~\ref{thm:pot DB}.\ref{itm:pot DB.1} and
\ref{thm:pot DB}.\ref{itm:pot DB.2} give a complete answer to Question~\ref{ques:pdb}
if either $K_X$ is Cartier or the dimension of $X$ is two.  Theorem~\ref{thm:pot
  DB}.\ref{itm:pot DB.3} can also be interpreted as a positive result: it says that
in higher dimensions, we stand a chance of finding a suitable $\Sigma \subset X$
making $(X, \Sigma)$ into a \db pair, even if $K_X$ is \Q-Cartier.

Let us now put Theorem~\ref{thm:pot DB} in a broader philosophical perspective.  It
is well known that for singularities in the \mmp, decreasing the boundary divisor by
a $\Q$-Cartier divisor improves the singularities of the pair. In particular, as
mentioned above, if $K_X$ is \Q-Cartier, then $(X, \Delta)$ being klt, dlt, lc,
etc.~implies that $X$ satisfies the same condition.
Theorem~\ref{thm:pot DB}.\ref{itm:pot DB.3} tells us that this principle does not
hold for \db singularities. It is relatively easy to give an example of a non-normal
singularity which is \pdb, but not \db \cite[Ex.~2.10]{KS13}. It is a little more
complicated to give a normal example of the same behavior \cite[Ex.~2.14]{KS13}. Then
it is much harder to give an example with a $\Q$-Cartier canonical divisor. Our
example in \ref{thm:pot DB}.\ref{itm:pot DB.3} does exactly that.

Another distinction between singularities of the \mmp and \db singularities is that
the former depend on the behavior of $mK_X$, a multiple of the canonical divisor for
a sufficiently divisible $m\in \N$, while rational and \db singularities depend on
the behavior of the canonical divisor itself. This distinction manifests itself in
the fact that rational singularities whose canonical divisor is Cartier are
canonical, but a rational singularity with only a $\Q$-Cartier canonical divisor does
not need to be even log canonical.  Theorem~\ref{thm:pot DB}.\ref{itm:pot DB.2} and
\ref{thm:pot DB}.\ref{itm:pot DB.3} demonstrate that this phenomenon also happens for
\db singularities.

\begin{rem}
  Our example in (\ref{thm:pot DB}.\ref{itm:pot DB.3}) has a \Q-Cartier canonical
  divisor of index $4$.  One might ask whether there also exist examples of index $2$
  or $3$. We conjecture that this is the case, however it is not clear whether our
  construction can be adapted to yield such examples.
\end{rem}

\subsection{Outline of proofs}

For Theorem~\ref{thm:pot DB}.\ref{itm:pot DB.1}, in the surface case we prove the
statement directly and then we conclude the general case using the deformation
invariance of \db singularities \cite{KS11}.  For Theorem~\ref{thm:pot
  DB}.\ref{itm:pot DB.2}, we apply the vanishing theorem of \cite[5.3]{Kov13}
together with the techniques from \cite[Section 17]{GKKP11} in order to obtain an
extension theorem for reflexive differentials, Theorem~\ref{thm:Ext of p-forms on DB
  pairs}, from which the claim immediately follows.

Theorem~\ref{thm:pot DB}.\ref{itm:pot DB.3} has the most involved proof and actually
the bulk of this paper is devoted to the construction of the example whose existence
is claimed there.  The basic idea is to take a cone over a smooth projective surface
$T$. So we first need a criterion for a cone (or a pair of cones) to be \db. It turns
out that such a criterion can be phrased in terms of the cohomology of certain line
bundles on $T$.  It is well-known that a cone over $T$ can have a \Q-Cartier
canonical divisor only if $K_T$ is either anti-ample, torsion, or ample. Furthermore,
in the first two cases, the cone is automatically \db, even log canonical. Hence our
example $T$ must necessarily be canonically polarized.

In order to construct $T$, first we find a ruled surface $S$ having the required
cohomological properties. Then $T$ is defined to be the general member of a suitable
linear system in the product $S \x B$, where $B$ is a curve of genus $2$. The tricky
part is to show that $T$ is smooth although it lives in a linear system with a
non-empty base locus.

\subsection{Acknowledgements}

We would like to thank Clemens J\"order, Stefan Kebekus, and Karl Schwede for
interesting discussions on the subject of this paper.
Furthermore we would like to thank the anonymous referee for providing suggestions
that made the paper more accessible.

\subsection{Notation, definitions, and conventions}

Throughout this paper, we work over the field of complex numbers $\C$.

A \emph{reduced pair} $(X, \Sigma)$ consists of $X$ a reduced scheme of finite type
over $\C$ and a reduced closed subscheme $\Sigma \subset X$.  
Neither $X$ nor $\Sigma$ is assumed to be pure dimensional or normal.
  
Let $(X,\Sigma)$ be a reduced pair and $x\in X$ a point. We say that $(X,\Sigma)$ is
\emph{snc at $x$} if there exists a Zariski-open neighbourhood $U\subseteq X$ of $x$
such that $U$ is smooth and $\Sigma \cap U$ is either empty, or a divisor with simple
normal crossings.  The pair $(X, \Sigma)$ is called an \emph{snc pair} or simply
\emph{snc} if it is snc at every point of $X$.

Given a reduced pair $(X,\Sigma)$, $(X,\Sigma)_{\reg}$ will denote the maximal open
subset of $X$ where $(X,\Sigma)$ is snc, and $(X,\Sigma)_{\sing}$ its complement,
with the induced reduced subscheme structure.

A reduced pair $(X,\Sigma)$ is called \emph{normal} if $X$ is normal and
\emph{projective} if $X$ is projective.

If $(X,\Sigma)$ is a normal reduced pair, then by definition $X$ is smooth in
codimension $1$.  Furthermore, since $\Sigma$ is reduced, near a general point of
$\Sigma$, both $X$ and $D$ are smooth. In particular in this case,
$\codim_X(X,\Sigma)_{\sing}\geq 2$, or in other words $(X,\Sigma)$ is snc in
codimension $1$.

Let $(X,\Sigma)$ be a normal reduced pair. A \emph{log resolution} of $(X,\Sigma)$ is
a proper birational morphism $\pi\!:\wt X\to X$ such that $\wt X$ is smooth,
both the pre-image $\pi^{-1}(\Sigma)$ of $\Sigma$ and the exceptional set
$E=\Exc(\pi)$ are of pure codimension $1$ in $\wt X$, and $(\wt X,\wt D+E)$ is an snc
pair where $\wt D= \pi^{-1}(\Sigma)_{\red}$ is the reduced divisor supported on
$\pi^{-1}(\Sigma)$.

Let $D,D_1,D_2$ be divisors on a normal variety. Then $D_1\vee D_2$ denotes the
smallest divisor that contains both $D_1$ and $D_2$ and $D_1\wedge D_2$ denotes the
largest divisor that is contained in both $D_1$ and $D_2$. Finally, we will use the shorthand $h^i(X, D)$ to denote $\dim_\C H^i(X,\sO_X(D))$.

\subsection{Du~Bois singularities and Du~Bois pairs}\label{sec:pair of DB}

The Deligne-Du~Bois complex \cite{DB81} associated to a complex variety $X$ is a
filtered complex $\underline\Omega_X^\kdot$, unique up to quasi-isomorphism, which
for a smooth $X$ is isomorphic to the de Rham complex considered with the stupid
filtration. Many of the usual cohomological properties of the de Rham complex that
hold for smooth varieties remain true for arbitrary varieties if one replaces the de
Rham complex with the Deligne-Du~Bois complex. In particular, if $X$ is proper, then
there is a Fr\"olicher-type spectral sequence converging to singular cohomology and
degenerating at $E_1$. We say that $X$ has Du~Bois singularities if the zeroth graded
piece of $\underline\Omega_X^\kdot$ is quasi-isomorphic to $\O_X$.  It follows that
if $X$ is proper and has Du~Bois singularities, then the natural map $H^i(X^\an, \C)
\to H^i(X^\an, \O_{X^\an})$ is surjective. In fact, this property is close to
characterizing Du~Bois singularities, cf.~\cite{MR2987664}. Note however that Du~Bois
varieties in general are not necessarily normal and hence may have singularities in
codimension 1.  For example normal crossing singularities are Du~Bois.

We briefly explain the construction of the (zeroth graded piece of the)
Deligne-Du~Bois complex. Given a singular variety $X$, first we resolve its
singularities by a log resolution $\pi$. However this is not enough, we also need to
resolve the singularities of the singular locus of $X$ and those of the exceptional
set of $\pi$, and then we need to resolve the exceptional sets of these resolutions,
and so on. To do this properly one ends up with a diagram of morphisms in the shape
of a $(\dim X + 1)$-dimensional hypercube, or more precisely a ``cubical
hyperresolution'' of $X$ cf.\ \cite{Carlson85,GNPP88,PetersSteenbrinkBook}.
Similarly to the way we associate a simple complex to a double complex, taking
disjoint unions of certain objects in this diagram yields a ``semi-simplicial variety
$X_\kdot$ with a morphism $\eps_\kdot\!: X_\kdot \to X$''. On every component of
$X_\kdot$, we may consider an injective resolution of the structure sheaf, and we may
put all these resolutions together into a double complex using the pull-back and
push-forward maps between the components of $X_\kdot$.  Now by applying
${\eps_\kdot}_*$ to this double complex and forming the associated simple complex, we
obtain the derived push-forward $R{\eps_\kdot}_* \sO_{X_\kdot}$.  This is exactly the
zeroth graded piece of the Deligne-Du~Bois complex. For a more detailed, yet still
down-to-earth introduction see \cite[\S2]{Ste85}, for a rigorous treatment see
\cite{DB81,GNPP88} or the more recent \cite[\S7.3]{PetersSteenbrinkBook}.

Du~Bois pairs were introduced in \cite{Kov11}. Their definition is as involved as the
definition of Du~Bois singularities, so we will not repeat it here. The essential
ingredient is the following: For any reduced pair $(X,\Sigma)$ there exists an object
in the filtered derived category of $X$, called the Deligne-Du~Bois complex of
$(X,\Sigma)$ that relates the Deligne-Du~Bois complex of $X$ and that of $\Sigma$.
If $(X,\Sigma)$ is an snc pair, then the Deligne-Du~Bois complex of $(X,\Sigma)$ is
quasi-isomorphic to $\Omega_X^\kdot(\log \Sigma)(-\Sigma)$.  A reduced pair
$(X,\Sigma)$ is a \emph{Du~Bois pair} if the associated zeroth graded quotient of the
Deligne-Du~Bois complex of $(X,\Sigma)$ is quasi-isomorphic to the ideal sheaf of
$\Sigma$ in $\O_X$. 
For more on Du~Bois pairs see \cite{Kov11} and \cite[\S6]{SingBook}.

As we already defined in Definition~\ref{dfn:pdb}, a variety $X$ is said to have
\emph{\pdb} singularities if there exists a cover of $X$ by Zariski-open subsets $U_i
\subset X$ such that for any $i$, there exists a subvariety $\Sigma_i \subset U_i$
not containing any irreducible component of $U_i$ and making $(U_i, \Sigma_i)$ into a
\db pair.

\section{Two Du~Bois criteria}\label{sec:cone DB pair}

In this section, we give two necessary and sufficient criteria for pairs of a certain
kind to be Du~Bois.  The first criterion is concerned with varieties with only
isolated non-Du~Bois points, while the second one, a corollary of the first, deals
with cones over Du~Bois pairs $(X, \Sigma)$.  The latter criterion is likely known to
experts in some form at least in the case $\Sigma = \emptyset$. In that case a
similar statement was proved in \cite[Thm.~4.4]{Ma13}.

\begin{thm}[A Du~Bois criterion for isolated non-\db locus]\label{thm:isol DB
    criterion} Let $(X, \Sigma)$ be a normal reduced pair with a closed point $x \in
  X$ such that $(X\setminus \{x\}, \Sigma\setminus\{x\})$ is a \db pair.  Let $f\!: Y
  \to X$ be a proper birational morphism from a normal variety $Y$ that is an
  isomorphism over $X\setminus\{x\}$.  Let $E$ denote the (not necessarily divisorial)
  exceptional locus of $f$, and set $\Gamma = E \cup f^{-1}(\Sigma)$. Assume that
  $(Y, \Gamma)$ is a Du~Bois pair.  Then $(X, \Sigma)$ is a Du~Bois pair if and only
  if
  \begin{equation}\label{eqn:isol DB criterion}
    R^i f_* \sO_Y(-\Gamma) = 0
  \end{equation}%
  for all $i \ge 1$.
\end{thm}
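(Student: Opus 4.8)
The plan is to compare the Deligne--Du~Bois complexes of the pairs $(X,\Sigma)$ and $(Y,\Gamma)$ via the morphism $f$, exploiting the fact that $f$ is an isomorphism away from $x$ and that $(Y,\Gamma)$ is already a Du~Bois pair by hypothesis. First I would recall the two exact triangles that govern this situation: on the one hand, there is a triangle relating $\underline\Omega^0_{(X,\Sigma)}$, $\underline\Omega^0_X$, and $\underline\Omega^0_\Sigma$, and similarly for $(Y,\Gamma)$; on the other hand, functoriality of the Deligne--Du~Bois complex gives a natural morphism $\underline\Omega^0_{(X,\Sigma)} \to Rf_* \underline\Omega^0_{(Y,\Gamma)}$ fitting into a triangle whose third term $C$ is supported on $\{x\}$ (since $f$ is an isomorphism over $X\setminus\{x\}$ and likewise for the $\Sigma$-parts). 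The key point is that $(X,\Sigma)$ is a Du~Bois pair if and only if the natural map $\mathcal I_{\Sigma\subset X} \to \underline\Omega^0_{(X,\Sigma)}$ is a quasi-isomorphism, and similarly $(Y,\Gamma)$ being a Du~Bois pair means $\mathcal I_{\Gamma\subset Y}$ is quasi-isomorphic to $\underline\Omega^0_{(Y,\Gamma)}$.

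Given this, I would argue as follows. Since $(Y,\Gamma)$ is a Du~Bois pair, $Rf_*\underline\Omega^0_{(Y,\Gamma)} \isom Rf_* \sO_Y(-\Gamma)$ in the derived category. Assembling the functoriality triangle and the defining triangle for $(X,\Sigma)$, one gets a triangle
\begin{equation}
  \mathcal I_{\Sigma\subset X} \longrightarrow Rf_*\sO_Y(-\Gamma) \longrightarrow Q \xrightarrow{+1}
\end{equation}
where $Q$ is a complex supported on $\{x\}$ whose vanishing is equivalent to $(X,\Sigma)$ being a Du~Bois pair — more precisely, one should check that the cone of $\mathcal I_{\Sigma\subset X}\to \underline\Omega^0_{(X,\Sigma)}$ maps to $Q$ via a map that is an isomorphism because the ``$Y$-side'' is already resolved. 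Then the statement reduces to: $Q = 0$ if and only if $R^i f_*\sO_Y(-\Gamma)=0$ for $i\geq 1$. The ``if'' direction is the substantive one: if the higher direct images vanish, then $Rf_*\sO_Y(-\Gamma)$ is just the sheaf $f_*\sO_Y(-\Gamma)$ in degree $0$; one identifies this sheaf with $\mathcal I_{\Sigma\subset X}$ using normality of $X$ and the fact that $f$ is an isomorphism in codimension $1$ (the ideal sheaf of a reduced divisorial-in-codimension-one subscheme is reflexive-ish enough that pushing forward recovers it), forcing $Q=0$ and hence $(X,\Sigma)$ Du~Bois. For the converse, if $(X,\Sigma)$ is Du~Bois then $Rf_*\sO_Y(-\Gamma)\isom \mathcal I_{\Sigma\subset X}$ is concentrated in degree $0$, so $R^if_*\sO_Y(-\Gamma)=0$ for $i\geq 1$ as these are the cohomology sheaves of that complex.

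The main obstacle I anticipate is the careful bookkeeping with the filtered derived category: one must verify that the functoriality morphism for Deligne--Du~Bois complexes of pairs is compatible with the triangles relating the pair complex to the complexes of $X$ and of $\Sigma$ separately, and that the resulting comparison map on cones is genuinely an isomorphism rather than merely a map — this is where the hypothesis that $(Y,\Gamma)$ is Du~Bois and that $E$ and $f^{-1}(\Sigma)$ are combined into a single $\Gamma$ does the real work. A secondary point requiring care is the identification $f_*\sO_Y(-\Gamma)\isom \mathcal I_{\Sigma\subset X}$; this uses that $Y\to X$ is an isomorphism over $X\setminus\{x\}$ together with normality, so that sections of $\sO_Y(-\Gamma)$ extend across the codimension-$\geq 2$ point $x$ to sections of $\mathcal I_{\Sigma\subset X}$, but one should double-check the behaviour along $f^{-1}(\Sigma)$ versus $\Sigma$ itself. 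Once these compatibilities are in place the logical structure of the equivalence is exactly the one sketched above.
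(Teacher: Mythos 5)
Your overall skeleton matches the paper's: reduce the theorem to the two statements that (a) the natural map $\gamma\colon \underline\Omega^0_{X,\Sigma} \to Rf_*\underline\Omega^0_{Y,\Gamma}$ is a quasi-isomorphism, so that the Du~Bois hypothesis on $(Y,\Gamma)$ identifies $\underline\Omega^0_{X,\Sigma}$ with $Rf_*\sO_Y(-\Gamma)$, and (b) $f_*\sO_Y(-\Gamma)\simeq \sI_{\Sigma\subset X}$, which you correctly obtain from normality of $X$ and reducedness of $\Sigma$ (the paper does this with the commutative ladder comparing the ideal-sheaf sequences on $X$ and on $Y$). Your converse direction is also fine: once $\underline\Omega^0_{X,\Sigma}\simeq Rf_*\sO_Y(-\Gamma)$, Du~Bois-ness forces the complex to be concentrated in degree $0$.

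The genuine gap is in (a). You only argue that the cone $C$ of $\gamma$ is supported on $\{x\}$, and then assert that the cone of $\sI_{\Sigma\subset X}\to\underline\Omega^0_{X,\Sigma}$ is identified with your $Q$ ``because the $Y$-side is already resolved.'' That is not a proof, and support on $\{x\}$ is not enough: by the octahedral axiom, $Q$ differs from the cone measuring Du~Bois-ness of $(X,\Sigma)$ exactly by $C$, so the asserted equivalence ``$Q=0$ iff $(X,\Sigma)$ is Du~Bois'' requires $C=0$, i.e.\ that $\gamma$ is an honest isomorphism. Observe that the sheaves the theorem is actually about, $R^if_*\sO_Y(-\Gamma)$, are themselves skyscrapers at $x$, so ``supported at $x$'' can never by itself force anything to vanish here. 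Moreover, the vanishing of $C$ is not a consequence of $(Y,\Gamma)$ being Du~Bois (that hypothesis is only used to replace $\underline\Omega^0_{Y,\Gamma}$ by $\sO_Y(-\Gamma)$); it is the cohomological descent property of the Deligne--Du~Bois complex, namely the distinguished triangle
\[
\underline\Omega^0_X\longrightarrow\underline\Omega^0_{\{x\}}\oplus Rf_*\underline\Omega^0_Y\longrightarrow Rf_*\underline\Omega^0_E\longrightarrow\underline\Omega^0_X[1]
\]
of \cite[Prop.~4.11]{DB81} for the square $(Y,E)\to(X,\{x\})$, combined with the analogous data for $\Sigma$ and $\Gamma$ and the $3\times 3$-type lemma \cite[(B.1.1)]{Kovacs11b}. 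This is exactly the input the paper uses to prove that $\gamma$ is an isomorphism; without it your comparison triangle does not close up and the main equivalence is unproven.
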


\subsection{Cones over pairs}\label{subsec:cones}

First we recall some basic facts about cones, and we fix the notation used in
Theorem~\ref{thm:cone DB criterion}. We will follow the conventions and notation 
of \cite[\S3.1]{SingBook}.

\begin{ntn}[Affine cones]\label{ntn:cones}
  Let $X$ be a projective scheme and $\sL \in \Pic X$ an ample line bundle on $X$.
  The \emph{affine cone over $X$ with respect to $\sL$} is
  \[ 
  C_a(X, \sL) = \Spec R(X, \sL), 
  \]
  where
  \[ 
  R(X, \sL) = \bigoplus_{n \ge 0} H^0(X, \sL^n) 
  \] 
  is the section ring of $\sL$.  If there is no ambiguity about the choice of $\sL$,
  we will write $CX$ for $C_a(X, \sL)$.  If $X$ is connected, the \emph{vertex} $P
  \in CX$ is defined to be the closed point corresponding to the maximal ideal %
  \[ 
  \bigoplus_{n \ge 1} H^0(X, \sL^n) \subset R(X, \sL). 
  \]
\end{ntn}

\begin{rem}
  See \cite[\S3.1]{SingBook} for generalities on cones. Note that this construction
  works even if $\sL$ is not very ample.  If $X$ is normal and $\sL$ is very ample,
  then $C_a(X, \sL)$ is the normalization of the classical affine cone over the
  embedding of $X$ via $\sL$. In particular, $C_a(X, \sL)$ is isomorphic to the
  classical affine cone if and only if the embedding given by $\sL$ is projectively
  normal.  Notice further that $C_a(X, \sL)$ is normal whenever $X$ is normal, even
  if $\sL$ is not very ample.
\end{rem}

\begin{ntn}[Pairs of cones]
  Let $X$ be a normal projective variety, $\Sigma \subset X$ a reduced subscheme, and
  $\sL \in \Pic X$ an ample line bundle on $X$. There is a natural map $\iota\!:
  C_a(\Sigma, \sL|_\Sigma) \to C_a(X, \sL)$, which is a closed embedding away from
  the vertex $P \in CX$.  We will also denote $C_a(X, \sL)$ by $CX$, the image of
  $\iota$ by $C\Sigma$, and the \emph{pair of cones} consisting of $CX$
  and $C\Sigma$ by $C_a(X, \Sigma, \sL)$, or for simplicity by $(CX, C\Sigma)$.
\end{ntn}

\begin{thm}[A Du~Bois criterion for cones]\label{thm:cone DB criterion} 
  Let $(X,\Sigma)$ be a normal projective \db pair and $\sL$ an ample line bundle on
  $X$.  Then the pair of cones $C_a(X, \Sigma, \sL)$ is a Du~Bois pair if and only if
  \begin{sequation}\label{eqn:cone DB criterion}
    H^i(X, \sL^n(-\Sigma)) = 0 
  \end{sequation}%
  for all $i, n \ge 1$.
\end{thm}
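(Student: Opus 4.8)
The plan is to deduce this cone criterion from Theorem~\ref{thm:isol DB criterion} by a standard blow-up-the-vertex argument. Let $(CX,C\Sigma) = C_a(X,\Sigma,\sL)$ with vertex $P\in CX$. Away from $P$ the cone is an $\A^1\setminus\{0\}$-bundle (or rather the total space of $\sL^{-1}$ minus the zero section) over $X$, so $(CX\setminus\{P\},C\Sigma\setminus\{P\})$ is locally a product of $(X,\Sigma)$ with a smooth curve and hence is a \db pair because $(X,\Sigma)$ is. This puts us in the situation of Theorem~\ref{thm:isol DB criterion} with $x=P$. For the resolution $f\colon Y\to CX$ I would take the blow-up of the vertex, i.e.\ $Y$ is the total space of the line bundle $\sL^{-1}$ over $X$ (so $Y = \Spec_X\bigoplus_{n\ge 0}\sL^n$ with its natural map $p\colon Y\to X$), with exceptional divisor $E\cong X$ the zero section; the morphism $f$ contracts $E$ to $P$ and is an isomorphism elsewhere. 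Then $\Gamma = E\cup f^{-1}(C\Sigma)$, where $f^{-1}(C\Sigma)$ is the total space of $\sL^{-1}|_\Sigma$ over $\Sigma$; I would check that $(Y,\Gamma)$ is snc-free enough to be a \db pair, which follows because $Y$ is smooth over the smooth locus of $(X,\Sigma)$ and, more to the point, $(Y,\Gamma)$ restricted away from $E$ is $(CX\setminus\{P\},C\Sigma\setminus\{P\})$ while near $E$ it is locally the product of $(X,\Sigma)$ with $\A^1$, again \db; one can also invoke that $\Gamma$ is a \db pair as the pull-back of the \db pair $(X,\Sigma)$ under the smooth map $p$ together with the section — this is where I expect the main technical care is needed.

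With the hypotheses of Theorem~\ref{thm:isol DB criterion} in place, the criterion says $(CX,C\Sigma)$ is \db if and only if $R^if_*\sO_Y(-\Gamma)=0$ for all $i\ge 1$. The remaining task is purely a cohomological computation: identify these higher direct images in terms of the $H^i(X,\sL^n(-\Sigma))$. Here I would use that $f$ factors through $p\colon Y\to X$ via the contraction $Y\to CX$, or more directly compute $R^if_*$ by first pushing forward along $p$ (which is affine, so has no higher direct images) and keeping track of the grading. Concretely, $\sO_Y = \bigoplus_{n\ge 0} p^*\sL^n$ as a sheaf with $\A^1$-structure; the divisor $E$ is cut out by the tautological section, so $\sO_Y(-E) = \bigoplus_{n\ge 1} p^*\sL^n$ (a shift in the grading), and $\sO_Y(-\Gamma) = \sO_Y(-E)\otimes \sO_Y(-f^{-1}(C\Sigma))$ corresponds to $\bigoplus_{n\ge 1} p^*(\sL^n(-\Sigma))$ — more precisely one must be careful whether $f^{-1}(C\Sigma)$ is taken with reduced structure and whether the twist by $-\Sigma$ appears in every graded piece or only for $n$ large; since $\Sigma$ is a divisor only generically and we are told $(X,\Sigma)$ is merely a reduced pair, I would double-check this identification, but the upshot should be that $R^if_*\sO_Y(-\Gamma)$ is the skyscraper at $P$ whose stalk is $\bigoplus_{n\ge 1} H^i(X,\sL^n(-\Sigma))$. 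Vanishing of this for all $i\ge 1$ is exactly \eqref{eqn:cone DB criterion}.

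The one subtlety worth flagging is the direction ``$(CX,C\Sigma)$ \db $\Rightarrow$ \eqref{eqn:cone DB criterion}'': Theorem~\ref{thm:isol DB criterion} gives the equivalence only once we know $(Y,\Gamma)$ is a \db pair, so that hypothesis must be verified unconditionally, not as part of an equivalence. As noted, this should follow from the fact that $(Y,\Gamma)$ is, locally on $X$, a product of the \db pair $(X,\Sigma)$ with the affine line, and deformation-invariance / product properties of \db pairs; alternatively one can cite that the \db property of a pair is preserved under smooth (even flat with \db fibers) pull-back. I expect \emph{this} verification — reconciling the scheme structure on $\Gamma$ near $E$ with the pair $(X,\Sigma)$ and confirming the \db property there — to be the main obstacle, whereas the cohomological bookkeeping relating $R^if_*\sO_Y(-\Gamma)$ to $H^i(X,\sL^n(-\Sigma))$ is routine once the grading conventions from \cite[\S3.1]{SingBook} are pinned down.
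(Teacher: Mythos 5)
Your overall strategy is exactly the paper's: realize $Y=\Spec_X\bigoplus_{n\ge0}\sL^n$ as the blow-up of the vertex, apply Theorem~\ref{thm:isol DB criterion} with $x=P$, and reduce the vanishing $R^if_*\sO_Y(-\Gamma)=0$ to \eqref{eqn:cone DB criterion} by pushing forward along the affine morphism $\pi\colon Y\to X$ and keeping track of the grading. Your computation $\pi_*\sO_Y(-\Gamma)\simeq\bigoplus_{n\ge1}\sL^n(-\Sigma)$ agrees with the paper's (which obtains it from a $3\times3$ diagram of short exact sequences relating $\sO_Y(-\Gamma)$, $\sO_Y(-Z)$ and $\sO_E(-Z|_E)$), and the grading issue you raise resolves exactly as you guess: the twist by $-\Sigma$ occurs in every graded piece, and removing $E$ deletes the $n=0$ summand.

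The genuine gap is the one you flag but do not close: the verification that $(Y,\Gamma)$ is a \db pair, which must be established unconditionally before Theorem~\ref{thm:isol DB criterion} gives either direction of the equivalence. Moreover, the justification you sketch would fail as stated. Near the zero section $E$ the pair $(Y,\Gamma)$ is locally $(X\times\A^1,\ (\Sigma\times\A^1)\cup(X\times\{0\}))$, which is \emph{not} the product of $(X,\Sigma)$ with $\A^1$: the product structure and smooth pull-back of \db pairs only account for $Z=\pi^{-1}(\Sigma)$, i.e.\ they give that $(Y,Z)$ is \db, not $(Y,\Gamma)$ with $\Gamma=Z\cup E$. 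The paper devotes Lemma~\ref{lem:826} to precisely this point: it combines the facts that $(Y,Z)$ is \db (smooth pull-back, \cite[6.19]{SingBook}) and that $(\Gamma,Z)$ is \db (\cite[6.17]{SingBook}, using $E\cong X$ and $Z\cap E\cong\Sigma$) in a diagram of distinguished triangles, and then invokes \cite[(B.1.1)]{Kovacs11b} to conclude $\underline\Omega^0_{Y,\Gamma}\simeq\sI_{\Gamma\subset Y}$. An argument of this kind needs to be supplied; with it, the rest of your proposal goes through as written.
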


\subsection{Proof of Theorem~\ref{thm:isol DB criterion}}

First assume that $\Sigma = \emptyset$. In this case, $\Gamma = E$.  Consider the
following commutative diagram of distinguished triangles:
\[
\xymatrix{%
  \underline\Omega_{X, \{x\}}^0 \ar@{-->}[d]_\alpha\ar[r] &
  \underline\Omega_{X}^0 \ar[r]\ar[d] & \underline\Omega_{\{x\}}^0 \ar[d]\ar[r]^-{+1} & \\
  Rf_* \underline\Omega_{Y,E}^0 \ar[r] & Rf_* \underline\Omega_Y^0 \ar[r] & Rf_*
  \underline\Omega_E^0 \ar^-{+1}[r] & , }%
\]
where the solid arrows are the obvious natural maps and $\alpha$ is the induced map
that keeps the diagram commutative cf.~\cite[Prop.~3.11]{Kov11}.  Next consider the
distinguished triangle from \cite[Prop.~4.11]{DB81}:
\[
\xymatrix{%
  \underline\Omega_{X}^0 \ar[r] & \underline\Omega_{\{x\}}^0 \oplus Rf_*
  \underline\Omega_Y^0 \ar[r] & Rf_* \underline\Omega_E^0 \ar^-{+1}[r] & , }%
\]
and observe that combined with \cite[(2.1.4)]{KK10} this implies that $\alpha$ is an
isomorphism and hence there exists the following distinguished triangle:
\[
\xymatrix{%
  Rf_* \underline\Omega_{Y,E}^0 \ar[r] & \underline\Omega_{X}^0 \ar[r] &
  \underline\Omega_{\{x\}}^0 \ar^-{+1}[r] & .  }%
\]

Since $(Y,E)$ is a Du~Bois pair by assumption, $\underline\Omega_{Y,E}^0\simeq
\sO_Y(-E)$ and since $X$ is normal (for this seminormal would be enough), it follows
by \cite[Prop.~5.2]{MR1741272} (cf.~\cite[5.6]{MR2503989}, \cite[7.6]{KS09}) that
$h^0(\underline\Omega_{X}^0)\simeq \sO_{X}$ and hence we have the following long
exact sequence:
\[
0 \to f_*\sO_Y(-E) \to \sO_{X} \to \sO_{\{x\}} \to R^1f_*\sO_Y(-E) \to
h^1(\underline\Omega_{X}^0) \to \underbrace{ h^1(\underline\Omega_{\{x\}}^0) }_{=\,0}
\to \dots
\]
Since $\sO_{X}\to \sO_{\{x\}}$ is surjective, we conclude that $R^if_*\sO_Y(-E)\simeq
h^i(\underline\Omega_X^0)$ for $i\geq 1$ and hence in this case $X$ (or equivalently
$(X, \Sigma)$) is Du~Bois if and only if
\begin{sequation}
 \label{eq:1}
  R^if_*\sO_Y(-E)=0 \qquad \text{for $i \ge 1$}
\end{sequation}%

We will now turn to the case $\Sigma \neq \emptyset$.  In particular, since the
statement is local, we may assume that $x \in \Sigma$.  Again by
\cite[Prop.~4.11]{DB81}, we have a diagram of distinguished triangles:
\[
\xymatrix@R=1.5em{%
  \underline\Omega_{X,\Sigma}^0 \ar^-{\protect{\gamma}}[r] \ar[d] &
  Rf_* \underline\Omega_{Y,\Gamma}^0 \ar[d] \\
  % \ar[r] & 0 \ar[d] \ar^-{+1}[r] & \\
  \underline\Omega_{X}^0 \ar[r] \ar[d] & \underline\Omega_{\{x\}}^0 \oplus Rf_*
  \underline\Omega_Y^0 \ar[r] \ar[d] & Rf_* \underline\Omega_E^0 \ar^-{+1}[r]
  \ar@{=}[d] & \\
  \underline\Omega_{\Sigma}^0 \ar[r] \ar^-{+1}[d] & \underline\Omega_{\{x\}}^0 \oplus
  Rf_* \underline\Omega_\Gamma^0 \ar[r] \ar^-{+1}[d] & Rf_* \underline\Omega_E^0
  \ar^-{+1}[r] & . \\
  % \ar[d]^-{+1} & \\
  &&& \\
}%
\]
It follows from \cite[(B.1.1)]{Kovacs11b} that $\gamma$ is an isomorphism.  Since we
assumed $(Y, \Gamma)$ to be a Du~Bois pair, we obtain an isomorphism
\[
\underline\Omega_{X,\Sigma}^0 \isom Rf_* \O_Y(-\Gamma).
\] 
So $(X, \Sigma)$ is a Du~Bois pair if and only if
\[ 
f_* \O_Y(-\Gamma) \isom \sI_{\Sigma \subset X} \text{\,and\,} R^i f_*
\O_Y(-\Gamma) = 0 \text{ for $i \ge 1$}.
\]
Observe that there exists a short exact sequence:
\[
\xymatrix{%
  0 \ar[r] & \sI_{\Sigma \subset X} \ar[r] \ar[d] & \sO_{X} \ar[r] \ar[d]_ {\simeq}
  ^{ \underset{\text{\tiny is normal}} {\text{\tiny since $X$}}} & \sO_{\Sigma}
  \ar@<-1ex>@{^(->}[d] ^-{ \underset{\text{\tiny and $\Sigma=f(\Gamma)$}}
    {\underset{\text{\tiny is reduced}} {\text{\tiny since $\Sigma$}}}}
  \ar[r] & 0 \\
  0\ar[r] & f_*\sO_Y(-\Gamma) \ar[r] & f_*\sO_Y \ar[r] & f_*\sO_{\Gamma} \ar[r] &
  R^1f_*\sO_Y(-\Gamma) \ar[r] & \dots \\
}
\]
It follows that the image of $f_*\sO_Y$ in $f_*\sO_{\Gamma}$ is exactly
$\sO_{\Sigma}$ and hence $f_* \O_Y(-\Gamma) \simeq \sI_{\Sigma \subset X}$ always
holds.
So we obtain that $(X, \Sigma)$ is a Du~Bois pair if and only if
\begin{sequation}
  \label{eq:2}
  R^i f_* \sO_Y(-\Gamma) = 0 \text{ for $i \ge 1$}.
\end{sequation}%
  
Notice that if $\Sigma=\emptyset$, then $\Gamma=E$ so the conditions \eqref{eq:1} and
\eqref{eq:2} are actually the same whether $\Sigma=\emptyset$ or
$\Sigma\neq\emptyset$. This proves Theorem~\ref{thm:isol DB criterion}.

\subsection{Proof of Theorem~\ref{thm:cone DB criterion}}
Let $f\!: Y =\Spec_X \bigoplus _{m\geq 0} \sL^m \to CX$ be a weighted blowup of the
vertex $P \in CX$, with exceptional divisor $E \subset Y$ (cf.\
\cite[p.98]{SingBook}).  Then $Y$ is the total space of the dual bundle $\sL^{-1}$
and hence the natural map $\pi\!: Y \to X$ is a smooth affine morphism. Let $Z :=
\pi^{-1}(\Sigma)$ and $\Gamma := Z \cup E$.

\begin{lem} \label{lem:826}
  $(Y,\Gamma)$ is a \db pair.
\end{lem}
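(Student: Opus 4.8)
The plan is to verify that the pair $(Y,\Gamma)$ satisfies the definition of a Du~Bois pair by relating it to the Du~Bois pair $(X,\Sigma)$ via the smooth affine morphism $\pi\colon Y\to X$. Recall that $Y$ is the total space of the line bundle $\sL^{-1}$ over $X$, with the zero section being the exceptional divisor $E$, and $Z=\pi^{-1}(\Sigma)$, so that $\Gamma = E\cup Z$. The key observation is that $\pi$ restricts to a smooth morphism $Y\setminus E\to X$; more precisely, removing $E$ identifies $Y\setminus E$ with the complement of the vertex in $CX$, which is a $\mathbb{G}_m$-bundle over $X$. First I would record that since $\pi$ is smooth, the Deligne-Du~Bois complex pulls back well: for a smooth morphism $g\colon V\to W$ of relative dimension $d$ and a reduced pair $(W,\Theta)$, one has $\underline\Omega^0_{V,g^{-1}(\Theta)}\simeq Lg^*\underline\Omega^0_{W,\Theta}$ (this is one of the standard functoriality properties of the Deligne-Du~Bois complex of a pair, cf.~\cite{Kov11}, \cite[\S6]{SingBook}). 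Since $Lg^*=g^*$ is exact for a flat morphism, and since $h^0(\underline\Omega^0_{W,\Theta})=\sI_{\Theta\subset W}$ exactly when $(W,\Theta)$ is a Du~Bois pair, the fact that $(X,\Sigma)$ is Du~Bois would immediately give $h^0(\underline\Omega^0_{Y,Z})=\pi^*\sI_{\Sigma\subset X}=\sI_{Z\subset Y}$, i.e.~$(Y,Z)$ is a Du~Bois pair.

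The remaining issue is the presence of the extra divisor $E$, i.e.~passing from $(Y,Z)$ to $(Y,\Gamma)=(Y,Z\cup E)$. Here I would use the fact that $E$ is a smooth Cartier divisor on $Y$ (it is the zero section of a line bundle over the smooth-in-codimension-one $X$, and $Y$ itself is normal), and that $E$ meets $Z=\pi^{-1}(\Sigma)$ transversally in the sense that $Z\cap E\cong \Sigma$ sits inside $E$ as a Du~Bois subpair. The cleanest route is to invoke the additivity/excision triangle for Du~Bois pairs relating $(Y,Z\cup E)$, $(Y,Z)$, and $(E, Z\cap E)$: there is a distinguished triangle whose zeroth graded pieces read
\[
\underline\Omega^0_{Y,Z\cup E}\to \underline\Omega^0_{Y,Z}\to \underline\Omega^0_{E,Z\cap E}\xrightarrow{+1}.
\]
Since $(Y,Z)$ is Du~Bois by the previous paragraph, and since $(E,Z\cap E)$ is Du~Bois — because $E\cong X$ as schemes (the zero section) and $Z\cap E$ corresponds to $\Sigma$, so $(E,Z\cap E)\cong (X,\Sigma)$ is Du~Bois by hypothesis — the long exact sequence of cohomology sheaves forces $\underline\Omega^0_{Y,Z\cup E}\simeq \sO_Y(-Z-E)=\sO_Y(-\Gamma)$, which is precisely the statement that $(Y,\Gamma)$ is a Du~Bois pair.

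The main obstacle I anticipate is making the pullback statement for the Deligne-Du~Bois complex of a pair under a smooth affine morphism fully rigorous: one must be careful that the Deligne-Du~Bois complex is only defined up to quasi-isomorphism in a filtered derived category, and that the cubical hyperresolution used to compute it is compatible with the smooth base change $\pi$. For a smooth morphism this compatibility does hold — pulling back a cubical hyperresolution of $X$ along $\pi$ gives a cubical hyperresolution of $Y$, and similarly for the pair — but the verification requires invoking the relevant functoriality precisely. An alternative that sidesteps part of this is to apply Theorem~\ref{thm:isol DB criterion} directly with $(CX, C\Sigma)$ in place of $(X,\Sigma)$ and the blowup $f\colon Y\to CX$; however, that theorem presupposes $(Y,\Gamma)$ is Du~Bois, so it cannot be used to prove this lemma, and the self-contained route above via smooth pullback plus the excision triangle for the Cartier divisor $E$ is the one I would pursue. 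A secondary, more routine point is checking that $E$ and $Z$ together form a reasonable (snc in codimension one) configuration so that the excision triangle applies, but this follows from normality of $X$ and the explicit description of $Y$ as a line bundle total space.
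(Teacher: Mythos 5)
Your proposal is correct and follows essentially the same route as the paper: the paper cites \cite[6.19]{SingBook} for the smooth-pullback step showing $(Y,Z)$ is a \db pair, cites \cite[6.17]{SingBook} for the fact that $(\Gamma,Z)$ --- equivalently $(E,Z\cap E)\cong(X,\Sigma)$ --- is a \db pair, and then runs your excision argument as a $3\times 3$ diagram of distinguished triangles (the required compatibility with the ideal-sheaf sequence being supplied by \cite[(B.1.1)]{Kovacs11b}). The only quibble is your final identification $\underline\Omega^0_{Y,\Gamma}\simeq\sO_Y(-Z-E)$: since $\Sigma$, and hence $Z$, need not be a divisor, the conclusion should be stated as $\underline\Omega^0_{Y,\Gamma}\simeq\sI_{\Gamma\subset Y}$, which is clearly what you intend.
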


\begin{proof}
  Since $(X,\Sigma)$ is a \db pair, it follows from \cite[6.19]{SingBook} that
  $(Y,Z)$ is a \db pair and from \cite[6.17]{SingBook} that $(\Gamma,Z)$ is a \db
  pair. Hence, the second and third rows of the following diagram form distinguished
  triangles.  
  $$
  \xymatrix@R=1.5em@C=5em{%
    \sI_{\Gamma\subset Y} \ar[r]\ar[d] & \underline\Omega^0_{Y,\Gamma} \ar[d] & \\
    \sI_{Z\subset Y} \ar[r]\ar[d] & \underline\Omega^0_{Y} \ar[d] \ar[r] &
    \underline\Omega^0_Z \ar[r]^-{+1} \ar[d]^{\id}&  \\
    \sI_{Z\subset \Gamma} \ar[r] \ar[d]^-{+1} & \underline\Omega^0_{\Gamma}
    \ar[d]^-{+1} \ar[r] &
    \underline\Omega^0_Z \ar[r]^-{+1}&  \\
    && }
  $$
  The first column is an obvious short exact sequence of ideals and the second is a
  distinguished triangle by definition. Therefore, \cite[(B.1.1)]{Kovacs11b} implies
  that $\underline\Omega^0_{Y,\Gamma}\simeq \sI_{\Gamma\subset Y}$, so indeed
  $(Y,\Gamma)$ is \db.
\end{proof}

\begin{proof}[{Proof of Theorem~\ref{thm:cone DB criterion} continued}]
  By Lemma~\ref{lem:826} and Theorem~\ref{thm:isol DB criterion}, $(CX,C\Sigma)$ is a
  \db pair if and only if $R^i f_* \O_Y(-\Gamma)=0$ for all $i \ge 1$, so we need to
  prove that this vanishing is equivalent to~\eqref{eqn:cone DB criterion}.

First notice that $R^i f_* \O_Y(-\Gamma)$ is a skyscraper sheaf supported on $P \in
CX$, with stalk
\begin{sequation}
  \label{eq:7}
  H^i(Y, \O_Y(-\Gamma)) \isom H^i(X, \pi_*\O_Y(-\Gamma)),
\end{sequation}%
where the isomorphism follows because $\pi$ is an affine morphism.  In the remainder
of the proof we will demonstrate that
\begin{sequation}
  \label{eq:3}
  \pi_*\sO_Y(-\Gamma)\simeq \bigoplus_{n\geq 1} \sL^n(-\Sigma)
\end{sequation}%
which, combined with (\ref{eq:7}), implies the desired statement.

Recall that $\Gamma=Z\cup E$ and $\pi$ is affine and
consider the following diagram of short exact sequences:
\begin{sequation}
  \label{eq:5}
  \xymatrix@R=1.5em{%
    & 0 \ar[d]     & 0 \ar[d]     & 0 \ar[d] & \\
    0 \ar[r] & \pi_*\sO_Y(-\Gamma) \ar[r] \ar[d] & \pi_*\sO_Y(-Z) \ar[r] \ar[d] &
    \pi_*\sO_{E}(-Z|_E) \ar[r] \ar[d] & 0 \\
    0 \ar[r] & \pi_*\sO_Y(-E) \ar[r] \ar[d] & \pi_*\sO_Y \ar[r] \ar[d] &
    \pi_*\sO_{E} \ar[r] \ar[d] & 0 \\
    0 \ar[r] & \pi_*\sO_Z(-E|_Z) \ar[r] \ar[d] & \pi_*\sO_Z \ar[r] \ar[d] &
    \pi_*\sO_{Z\cap E} \ar[r] \ar[d] & 0 \\
    & 0 & 0 & 0 }
\end{sequation}%
By construction
\[ 
\pi_*\sO_Y \simeq \bigoplus_{n \ge 0} \sL^n, \quad\text{ and }\quad \
\pi_*\sO_Z \simeq \bigoplus_{n \ge 0} \sL^n|_\Sigma, 
\]
so 
\begin{sequation}
  \label{eq:4}
  \pi_*\sO_Y(-Z) \simeq \bigoplus_{n \ge 0} \sL^n(-\Sigma). 
\end{sequation}%
It is easy to see that $\pi$ induces isomorphisms $E\simeq X$ and $Z\cap E\simeq
\Sigma$, and hence
\[
\pi_*\sO_E\simeq \sO_X \qquad\text{and} \qquad \pi_*\sO_{Z\cap E}\simeq \sO_\Sigma,
\]
which implies that
\begin{sequation}
  \label{eq:6}
  \pi_*\sO_E(-Z|_E) \simeq \sO_X(-\Sigma)\simeq \sL^0(-\Sigma).
\end{sequation}%
Finally \eqref{eq:5}, \eqref{eq:4}, and \eqref{eq:6} together imply \eqref{eq:3} and
hence Theorem~\ref{thm:cone DB criterion} follows.
\end{proof}

\section{Proof of Theorem~(\ref{thm:pot DB}.\ref{itm:pot DB.1})}

\begin{prp}\label{prp:DB surface pair}
Let $(X, \Sigma)$ be a Du Bois pair, where $X$ is a normal surface. Then $X$ is Du Bois.
\end{prp}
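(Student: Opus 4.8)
The plan is to reduce to the situation of Theorem~\ref{thm:isol DB criterion} by working near a single non-\db point and exhibiting a resolution whose exceptional data forms a \db pair. Since $X$ is a normal surface, its singular locus is a finite set of points, and the assertion ``$X$ is \db'' is local; moreover, by~\cite{Kov99} (rational implies \db) or by direct inspection of surface singularities, we already know that $X$ is \db away from finitely many points. So fix a closed point $x \in X$ and assume, as we may after shrinking, that $X \setminus \{x\}$ is \db. Since $\supp\Sigma$ is at most a curve and $(X,\Sigma)$ is a \db pair, the pair $(X\setminus\{x\},\Sigma\setminus\{x\})$ is still a \db pair, so the hypotheses of Theorem~\ref{thm:isol DB criterion} are in place provided we can choose a good $f\!:Y\to X$.

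First I would take $f\!:Y\to X$ to be a log resolution of $(X,\Sigma)$ that is an isomorphism over $X\setminus\{x\}$; since $X$ is a surface this can be arranged so that $E=\Exc(f)$ is a simple normal crossing divisor and, together with the strict transform of $\Sigma$, the total divisor $\Gamma = E\cup f^{-1}(\Sigma)$ is snc on the smooth surface $Y$. An snc pair is automatically a \db pair (its Deligne--\db complex is $\Omega_Y^\kdot(\log\Gamma)(-\Gamma)$, whose zeroth graded piece is $\sI_{\Gamma\subset Y}=\O_Y(-\Gamma)$), so $(Y,\Gamma)$ is a \db pair. Theorem~\ref{thm:isol DB criterion} then tells us that $(X,\Sigma)$ is a \db pair --- which it is, by assumption --- \emph{if and only if} $R^if_*\O_Y(-\Gamma)=0$ for all $i\ge 1$. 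The point is that for a surface the only nonzero higher direct image can be $R^1f_*$, and the vanishing of $R^1f_*\O_Y(-\Gamma)$ is what we get to exploit.

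Next I would run the \emph{same} machine with $\Sigma$ replaced by $\emptyset$, i.e.\ apply Theorem~\ref{thm:isol DB criterion} to the pair $(X,\emptyset)$ using the same $Y$ but with exceptional data $E$ alone (note $(Y,E)$ is still snc, hence \db). That criterion says $X$ is \db if and only if $R^if_*\O_Y(-E)=0$ for all $i\ge 1$, i.e.\ if and only if $R^1f_*\O_Y(-E)=0$. So everything comes down to comparing $R^1f_*\O_Y(-\Gamma)$ with $R^1f_*\O_Y(-E)$, where $\Gamma = E + \wt D$ and $\wt D = f^{-1}(\Sigma)_{\red}$ is the strict transform of $\Sigma$ (an effective divisor meeting $E$ but with no component contracted by $f$). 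From the short exact sequence $0\to \O_Y(-E-\wt D)\to \O_Y(-E)\to \O_{\wt D}(-E|_{\wt D})\to 0$ and pushing forward, vanishing of $R^1f_*\O_Y(-\Gamma)$ will force vanishing of $R^1f_*\O_Y(-E)$ once we know $f_*\O_Y(-E)\to f_*\O_{\wt D}(-E|_{\wt D})$ is surjective. Since $f$ is an isomorphism near the generic point of each component of $\wt D$ (as $\wt D$ is not contracted and $X$ is normal, hence smooth in codimension $1$ along $\Sigma$), that surjectivity is a local statement supported only over $x$, where $\wt D$ is a union of curves mapping finitely to $\Sigma$; unwinding it reduces to checking that global sections of $\O_{\wt D}(-E|_{\wt D})$ near $x$ lift, which holds because $-E|_{\wt D}$ is $f$-nef (being a difference of fibral curves pulled back appropriately) --- or, more robustly, by using the diagram-chase bookkeeping exactly as in the proof of Theorem~\ref{thm:cone DB criterion}, comparing the push-forwards of $\O_Y(-\Gamma)$, $\O_Y(-E)$ and $\O_{\wt D}(-E|_{\wt D})$.

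The main obstacle, and the step to be careful about, is precisely this comparison: one must verify that the vanishing of $R^1f_*\O_Y(-\Gamma)$ that we \emph{have} (from $(X,\Sigma)$ being \db) implies the vanishing of $R^1f_*\O_Y(-E)$ that we \emph{want} (for $X$ to be \db). The subtlety is that $\wt D$ is genuinely two-dimensional in $Y$ only in the sense that it is a curve, and its intersection with $E$ sits over $x$; the surjectivity of $f_*\O_Y(-E)\to f_*\O_{\wt D}(-E|_{\wt D})$ is where the surface hypothesis is really used (in higher dimension this is exactly where the argument breaks, consistent with \ref{thm:pot DB}.\ref{itm:pot DB.3}). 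Once that surjectivity is established --- by the long exact sequence of $R^\kdot f_*$ applied to $0\to\O_Y(-\Gamma)\to\O_Y(-E)\to\O_{\wt D}(-E|_{\wt D})\to 0$, together with $R^1f_*\O_Y(-\Gamma)=0$ and the fact that $R^1f_*\O_{\wt D}(-E|_{\wt D})$ is controlled because $\wt D\to\Sigma$ is finite over a neighborhood of $x$ --- we conclude $R^1f_*\O_Y(-E)=0$, hence $X$ is \db by Theorem~\ref{thm:isol DB criterion} applied to $(X,\emptyset)$, and since this holds at every singular point $x$, $X$ is \db globally.
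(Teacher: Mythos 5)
Your proposal follows essentially the same route as the paper: reduce to a single point $x$, apply Theorem~\ref{thm:isol DB criterion} once to $(X,\Sigma)$ and once to $(X,\emptyset)$ using the same log resolution, and compare the two vanishing conditions via the short exact sequence $0\to\O_Y(-\Gamma)\to\O_Y(-E)\to\O_{\wt D}(-E|_{\wt D})\to 0$, where $\wt D$ is the strict transform of $\Sigma$. The conclusion is correct, and you do name the decisive fact at the end, namely that $R^1f_*\O_{\wt D}(-E|_{\wt D})=0$ because $f|_{\wt D}$ is finite.

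One point of logic is worth straightening out. The surjectivity of $f_*\O_Y(-E)\to f_*\O_{\wt D}(-E|_{\wt D})$, which you present as the crucial step still to be verified, is not what the argument needs --- and as written it points the implication in the wrong direction. That surjectivity would show that the connecting map into $R^1f_*\O_Y(-\Gamma)$ is zero, i.e.\ that $R^1f_*\O_Y(-\Gamma)$ injects into $R^1f_*\O_Y(-E)$, which would let you deduce the vanishing you already have from the one you want, not conversely. What the proof actually uses is the exact sequence
\[
R^1f_*\O_Y(-\Gamma)\;\to\; R^1f_*\O_Y(-E)\;\to\; R^1f_*\O_{\wt D}(-E|_{\wt D}),
\]
in which the left term vanishes because $(X,\Sigma)$ is a \db pair and the right term vanishes because $f|_{\wt D}$ is finite; the middle term is then squeezed to zero with no further input. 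So the discussion of $f$-nefness and of lifting sections of $\O_{\wt D}(-E|_{\wt D})$ can simply be deleted; the surface hypothesis enters only through the reduction to an isolated singular point and through the finiteness of $\wt D\to\Sigma$.
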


\begin{proof}
  We may assume that $X$ has a single isolated singularity $x \in X$, that $\Sigma$
  is a divisor and that $x \in \Sigma$.  Let $\pi\!: Y \to X$ be a log resolution of
  $(X, \Sigma)$, with exceptional set $E \subset Y$, and let $\Gamma \subset Y$ be
  the preimage of $\Sigma$.  Notice that $\Gamma = E + T$, where $T := \pi^{-1}_*
  \Sigma$ is the strict transform of $\Sigma$.  

  By Theorem~\ref{thm:isol DB criterion} applied to $(X, \emptyset)$, we have that
  $X$ is Du Bois if and only if
  \begin{equation}\label{eq:surface DB} 
    R^1 \pi_* \sO_Y(-E) = 0,
  \end{equation}
  while $(X, \Sigma)$ is a Du Bois pair if and only if
  \begin{equation}\label{eq:surface pair DB} 
    R^1 \pi_* \sO_Y(-\Gamma) = 0.
  \end{equation}
  So it suffices to show that \eqref{eq:surface pair DB} implies \eqref{eq:surface
    DB}. To this end, consider the short exact sequence
  \[ 
  0 \to \sO_Y(-\Gamma) \to \sO_Y(-E) \to \sO_T(-E) \to 0 
  \] 
  and apply $\pi_*$. The associated long exact sequence gives the following:
  \[ 
  \dots\to R^1 \pi_* \sO_Y(-\Gamma) \to R^1 \pi_* \sO_Y(-E) \to R^1 \pi_* \sO_T(-E)
  \to \dots.
  \]
  However, $R^1 \pi_* \sO_T(-E) = 0$ because $\pi|_T$ is a finite morphism,
  and then \eqref{eq:surface pair DB} implies the desired statement.
\end{proof}

\begin{proof}[Proof of Theorem \protect{(\ref{thm:pot DB}.\ref{itm:pot DB.1})}]
  After shrinking $X$, we may assume that $X$ is affine and that there is
  a subvariety $\Sigma \subset X$ such that $(X, \Sigma)$ is a Du Bois pair.
  Let $H \subset X$ be the
  intersection of $n - 2$ general hyperplanes in $X$, where $n$ is the dimension of $X$. Then $(H, \Sigma|_H)$ is a Du
  Bois pair by repeated application of Lemma \ref{lem:Cutting down DB pairs}. By
  Proposition \ref{prp:DB surface pair}, it follows that $H$ is Du Bois.  A repeated
  application of \cite[Thm.~4.1]{KS11} now shows that $X$ is Du Bois near $H$. So the
  non-Du Bois locus of $X$ does not intersect a general complete intersection surface
  in $X$. It follows that the non-Du Bois locus has codimension at least three.
\end{proof}

\section{Proof of Theorem~(\ref{thm:pot DB}.\ref{itm:pot DB.2})}

Theorem~(\ref{thm:pot DB}.\ref{itm:pot DB.2}) is an immediate consequence of the
following theorem, which for log canonical pairs was proved in \cite[Theorem
16.1]{GKKP11}.

\begin{thm}[Extension theorem for $p$-forms on Du Bois pairs]\label{thm:Ext of
    p-forms on DB pairs} Let $(X, \Sigma)$ be a Du~Bois pair, where $X$ is normal. If
  $\pi\!: \wt X \to X$ is a log resolution of $(X, \Sigma)$ with exceptional divisor
  $E=\Exc(\pi)$, then the sheaves
  \[ 
  \pi_* \Omega_{\wt X}^p(\log \wh D), \quad 0 \le p \le n, 
  \] 
  are reflexive, where $\wh D=(\pi^{-1}(\Sigma)\vee E)_{\red}$, the reduced divisor
  with support $\pi^{-1}(\Sigma) \cup \supp E\subset \wt X$.
\end{thm}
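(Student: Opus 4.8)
The plan is to follow the strategy of \cite[\S17]{GKKP11}, replacing their use of the log-canonical vanishing theorem with the Du~Bois vanishing theorem \cite[5.3]{Kov13}. Since reflexivity is a local and codimension-two statement, I would argue by induction on $n=\dim X$, cutting down by general hyperplanes. The base case $n\le 2$ is essentially trivial: over the smooth locus the sheaves $\pi_*\Omega_{\wt X}^p(\log\wh D)$ agree with $\Omega_X^p(\log\Sigma)$, and in dimension $\le 2$ there is nothing to extend across beyond codimension-two points, so reflexivity is automatic once we know the sheaves are torsion-free (which they are, being push-forwards of locally free sheaves from a variety mapping birationally onto $X$). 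For the inductive step, by Lemma~\ref{lem:Cutting down DB pairs} a general hyperplane section $(H,\Sigma|_H)$ is again a Du~Bois pair, and the restriction of a log resolution of $(X,\Sigma)$ gives (after possibly further blowing up, which only helps) a log resolution of $(H,\Sigma|_H)$; so by induction the analogous sheaves on $H$ are reflexive.

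The heart of the argument is then a purely sheaf-theoretic extension lemma: if $\sF$ is a coherent sheaf on a normal variety $X$ which is reflexive in codimension one (automatic here, as $X$ is smooth in codimension one and the sheaves are locally free there) and if $\sF|_H$ is reflexive for general $H$, together with a vanishing statement $R^1\pi_*\big(\Omega_{\wt X}^p(\log\wh D)\otimes\pi^*\sO_X(-H)\big)=0$ feeding the restriction sequence, then $\sF$ itself is reflexive. Concretely I would look at the exact sequence
\[
0\to \pi_*\Omega_{\wt X}^p(\log\wh D)(-H)\to \pi_*\Omega_{\wt X}^p(\log\wh D)\to \pi_*\big(\Omega_{\wt X}^p(\log\wh D)|_{\wt H}\big)\to R^1\pi_*\Omega_{\wt X}^p(\log\wh D)(-H)\to\cdots,
\]
identify the middle restricted sheaf on $\wt H$ (using that $\wt H$ is a log resolution of $(H,\Sigma|_H)$ and that $\Omega_{\wt X}^p(\log\wh D)|_{\wt H}$ differs from $\Omega_{\wt H}^p(\log\wh D_H)$ only by terms supported on divisors contracted by $\pi|_{\wt H}$, which do not affect the push-forward up to the relevant order), and deduce reflexivity of $\pi_*\Omega_{\wt X}^p(\log\wh D)$ from reflexivity of its general hyperplane restriction via the standard criterion (depth $\ge 2$ along the non-free locus). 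This is exactly the inductive mechanism of \cite[\S16--17]{GKKP11}, specialised to our setting.

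The key input making all of this work — and the step I expect to be the main obstacle — is the vanishing $R^j\pi_*\Omega_{\wt X}^p(\log\wh D)(-H)=0$ for $j\ge 1$, or more precisely whatever vanishing is needed to control the connecting maps in the sequence above. For log canonical pairs this is supplied by \cite[Theorem 16.1]{GKKP11} (itself built on Koll\'ar--Nadel/Kawamata--Viehweg type vanishing for lc pairs), and the whole point here is that the Du~Bois hypothesis on $(X,\Sigma)$ gives, via \cite[5.3]{Kov13}, precisely the torsion-freeness / vanishing statement for $R\pi_*$ of the relevant twisted log differentials that one needs. So the plan is: (i) reduce to the case $p=n$ or handle all $p$ uniformly by the Bogomolov--Sommese style filtration argument of \cite{GKKP11}; (ii) invoke \cite[5.3]{Kov13} to get the vanishing $R^j\pi_*\big(\Omega_{\wt X}^n(\log\wh D)\big)=\text{(expected)}$ and its twisted analogue; (iii) run the hyperplane induction as above. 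The technical friction will be bookkeeping the difference between $\Omega_{\wt X}^p(\log\wh D)|_{\wt H}$ and the log differentials of the induced resolution of $H$, and checking that the residual terms are harmless for the push-forward — exactly the point where \cite{GKKP11} spends most of its effort, and which we can largely cite rather than reprove since those arguments are insensitive to the distinction between lc and Du~Bois pairs once the vanishing is in hand.
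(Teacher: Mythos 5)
Your overall strategy coincides with the paper's: rerun \cite[\S 17]{GKKP11}, substituting the Du~Bois vanishing theorem \cite[5.3]{Kov13} (Theorem~\ref{thm:Steenbrink I}) for the log canonical one and Lemma~\ref{lem:Cutting down DB pairs} for the Bertini statement. However, there is a genuine gap in how you propose to feed the vanishing into the induction, and it surfaces most clearly in your base case. You assert that for $\dim X\le 2$ reflexivity of $\pi_*\Omega^p_{\wt X}(\log\wh D)$ is ``automatic once we know the sheaves are torsion-free.'' This is false: a torsion-free sheaf on a normal surface need not be reflexive (the ideal sheaf of a point, which is $\pi_*\sO_{\wt X}(-E)$ for a blow-up, already fails), and reflexivity at a singular point $x\in X$ is precisely the statement that a log $p$-form defined on $\wt X\setminus F_x$ extends across the fibre $F_x=\pi^{-1}(x)_{\red}$. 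This is the nontrivial content of the theorem in dimension two and is exactly where the vanishing must be used: one needs $H^1_{F_x}\bigl(\wt X,\Omega^p_{\wt X}(\log\wh D)\bigr)=0$ (Corollary~\ref{cor:Steenbrink II}), which is deduced from $R^{n-1}\pi_*\bigl(\Omega^p_{\wt X}(\log\wh D)\otimes\sO_{\wt X}(-\wh D)\bigr)=0$ by duality for cohomology with support. The surface case is the start of the induction in the paper, not a triviality.

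Relatedly, you have misidentified the vanishing that is actually available. You ask for $R^j\pi_*\bigl(\Omega^p_{\wt X}(\log\wh D)\otimes\pi^*\sO_X(-H)\bigr)=0$ for $j\ge 1$; by the projection formula this equals $R^j\pi_*\bigl(\Omega^p_{\wt X}(\log\wh D)\bigr)\otimes\sO_X(-H)$, so the twist by $-H$ buys nothing and you would need $R^j\pi_*\Omega^p_{\wt X}(\log\wh D)=0$, which is false in general (already for $p=0$ it would force $X$ to have rational singularities). What \cite[5.3]{Kov13} provides is the single vanishing of $R^{n-1}\pi_*$ of the sheaf twisted by $-\wh D$, the boundary divisor upstairs. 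Consequently the extension mechanism is not the restriction exact sequence you write down, but local-cohomological extension of sections across the fibres $F_x$, run through a double induction on $\dim X$ and on $\codim_X\pi(E_0)$ for the components $E_0$ of the exceptional locus; the hyperplane cut via Lemma~\ref{lem:Cutting down DB pairs} serves to lower that codimension, not to feed a long exact sequence of direct images. Your suggested reduction to $p=n$ by a Bogomolov--Sommese type filtration is likewise not part of the argument: all $p$ are handled uniformly. So while you have named the correct ingredients, the assembly you propose does not work as stated, and the crucial extension step in the base case is missing.
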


\begin{proof}[{Proof of Theorem~(\ref{thm:pot DB}.\ref{itm:pot DB.2})}]
We will use the notation from Theorem~\ref{thm:Ext of p-forms on DB pairs} above.
Write $\Sigma=\Sigma_{\text{div}}\cup \Sigma_{\text{non-div}}$ as the union of closed
sets such that $\Sigma_{\text{div}}$ is a divisor and $\codim_X
\Sigma_{\text{non-div}}\geq 2$. Let $Z\:= \pi(E)\cup \Sigma_{\text{non-div}}$ and
$U\:=X\setminus Z$.  It follows that $\big(\pi_*\omega_{\wt X} (\wh D)\big)|_U \simeq
\big(\omega_X(\Sigma_{\text{div}})\big)|_U$, and applying Theorem~\ref{thm:Ext of
  p-forms on DB pairs} in the case $p = \dim X$ we obtain that $\pi_*\omega_{\wt X}
(\wh D)$ is reflexive.  Since $X$ is normal, $\codim_XZ\geq 2$, and hence
$\pi_*\omega_{\wt X} (\wh D) \simeq \omega_X(\Sigma_{\text{div}})$.

Now, by assumption $\supp \Delta\subset \Sigma$ and $K_X+\Delta$ is Cartier.
Therefore $\omega_X(\Delta)\subseteq \omega_X(\Sigma_{\text{div}})\simeq
\pi_*\omega_{\wt X} (\wh D)$. In other words there exists a non-zero morphism
$\omega_X(\Delta)\to \pi_*\omega_{\wt X} (\wh D)$ which is an isomorphism on a
non-empty open subset of $X$. By adjointness this implies the existence of a non-zero
morphism $\pi^*\omega_X(\Delta)\to \omega_{\wt X} (\wh D)$ which is an isomorphism on
a non-empty open subset of $\wt X$. Since $\omega_X(\Delta)$ is a line bundle, this
means that $\pi^*\omega_X(\Delta)\to \omega_{\wt X} (\wh D)$ is actually injective,
which means that all the discrepancies of the pair $(X,\Delta)$ are at least $-1$,
that is, $(X,\Delta)$ is log canonical. 
\end{proof}

\begin{comment}
  %%%%%%%%%%%%%%%%%%%%%%%%%%%%%%%%%%%%%%%%%%%%%%%%%%%%%%%%
  \begin{proof}
    Although we will use Theorem~\ref{thm:Ext of p-forms on DB pairs} only for
    $1$-forms, we formulate it for forms of arbitrary degree, since this causes no
    extra effort in the proof.
  \end{proof}
  %%%%%%%%%%%%%%%%%%%%%%%%%%%%%%%%%%%%%%%%%%%%%%%%%%%%%%%%
\end{comment}

\subsection{Steenbrink-type vanishing results}\label{subsec:Steenbrink}

Next we turn to the proof of Theorem~\ref{thm:Ext of p-forms on DB pairs}.  In fact,
the argument used to prove \cite[Thm.~16.1]{GKKP11} works in this case essentially
unchanged, provided the ingredients of that proof are adapted to the present
situation.

The proof of Theorem~\ref{thm:Ext of p-forms on DB pairs} relies on Steenbrink-type
vanishing results which, again, were already proved in \cite[Sec.~14]{GKKP11} for log
canonical pairs. Here we need the following more general statement:

\begin{thm}[{\cite[5.3]{Kov13}, cf.\ \cite[Theorem
    14.1]{GKKP11}}]\label{thm:Steenbrink I}
  Let $(X, \Sigma)$ be a Du~Bois pair and assume that $\dim X\geq 2$. Further let
  $\pi\!: \wt X \to X$ be a log resolution of $(X, \Sigma)$ with exceptional divisor
  $E=\Exc(\pi)$ and let $\wh D=(\pi^{-1}(\Sigma)\vee E)_{\red}$.  
  Then we have
  \[ 
  R^{n-1} \pi_* \big( \Omega_{\wt X}^p(\log \wh D) \tensor \O_{\wt X}(-\wh D) \big) =
  0
  \]
  for all $0 \le p \le n$. \qed
\end{thm}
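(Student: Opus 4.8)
The plan is to transplant the argument of \cite[\S14]{GKKP11} from the log canonical to the Du~Bois setting, treating the two extreme values $p=0$ and $p=n$ separately and interpolating between them. The assertion $R^{n-1}\pi_*\sF=0$, with $\sF:=\Omega^p_{\wt X}(\log\wh D)\otimes\sO_{\wt X}(-\wh D)$, is local on $X$, so I would assume $X$ affine, in which case it is equivalent, by the Leray spectral sequence, to $H^{n-1}(\wt X,\sF)=0$. The cases $p=0$ and $p=n$ can be disposed of directly at the level of sheaves; for the remaining values of $p$ I would bring in Hodge theory, which first requires passing to a projective model.

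For $p=n$ the sheaf $\sF$ is simply $\omega_{\wt X}$, so $R^i\pi_*\sF=0$ for every $i\ge1$ --- in particular for $i=n-1\ge1$, where the assumption $\dim X\ge2$ is used --- by Grauert--Riemenschneider vanishing. For $p=0$ the sheaf $\sF$ is $\sO_{\wt X}(-\wh D)$, and now the Du~Bois hypothesis does the work: by the computation underlying Theorem~\ref{thm:isol DB criterion}, which identifies $\underline\Omega^0_{X,\Sigma}$ with $R\pi_*\sO_{\wt X}(-\wh D)$ for a log resolution, together with the fact that $(X,\Sigma)$ being a Du~Bois pair forces $\underline\Omega^0_{X,\Sigma}\isom\sI_{\Sigma\subset X}$, a sheaf concentrated in degree zero, one obtains $R^i\pi_*\sO_{\wt X}(-\wh D)=0$ for all $i\ge1$.

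For the intermediate range $0<p<n$ I would argue as follows. Compactify $\wt X$ to a smooth projective variety over a projective closure $X'\supseteq X$, extending $\pi$, $\Sigma$ and $\wh D$ and folding the boundary into an enlarged snc divisor; this is harmless since the higher direct images in question are supported on the non-isomorphism locus of $\pi$, which lies over $X$, where $(X,\Sigma)$ is a Du~Bois pair. Writing again $\pi\colon\wt X\to X$ for the compactified data, a standard argument (Leray together with Serre vanishing for the lower direct images) reduces the required vanishing to $H^{n-1}\big(\wt X,\,\sF\otimes(\pi^*\sA)^m\big)=0$ for $m\gg0$, with $\sA$ ample on $X$; by Serre duality on $\wt X$ --- under which $\Omega^p_{\wt X}(\log\wh D)(-\wh D)$ is dual to $\Omega^{n-p}_{\wt X}(\log\wh D)$ with values in $\omega_{\wt X}$ --- this is in turn equivalent to $H^1\big(\wt X,\,\Omega^{n-p}_{\wt X}(\log\wh D)\otimes(\pi^*\sA)^{-m}\big)=0$. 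The Du~Bois condition on $(X,\Sigma)$ supplies the degeneration at $E_1$ of the Hodge-to-de~Rham type spectral sequence attached to $\Omega^\bullet_{\wt X}(\log\wh D)\otimes\sO_{\wt X}(-\wh D)$, and, fed through this Serre-duality reformulation, the degeneration bounds the relevant cohomology in terms of the endpoints $p=0$ and $p=n$ already handled. For $2\le p\le n-1$ the needed vanishing also follows from a logarithmic Akizuki--Nakano/Kawamata--Viehweg statement for $\pi^*\sA$ (using $\wh D\supseteq\Exc(\pi)$), so the genuinely delicate case is $p=1$, where the numerology of such vanishing theorems breaks down and the full Hodge-theoretic force of the Du~Bois property is indispensable. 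Accordingly, the main obstacle is this last step: promoting the abstract Du~Bois condition on $(X,\Sigma)$ to the concrete degeneration statement that makes the interpolation run --- equivalently, checking that the Hodge-theoretic inputs of \cite[\S14]{GKKP11}, phrased there via the mixed Hodge structures attached to log canonical pairs, persist for arbitrary Du~Bois pairs. The compactification above is a lesser, though not entirely automatic, technical point.
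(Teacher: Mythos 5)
The paper offers no proof of this statement to compare against: it is quoted verbatim from \cite[5.3]{Kov13} and closed with a \emph{qed} symbol, the whole point of Section 4 being that the hard vanishing is imported as a black box. Judged on its own terms, your skeleton is the right one and matches the intended decomposition: $p=n$ is Grauert--Riemenschneider; for $2\le p\le n-1$ the vanishing holds for \emph{any} log resolution of \emph{any} normal variety by Steenbrink's original theorem \cite[Thm.~2(b)]{Ste85} (essentially the Esnault--Viehweg-type argument you sketch, with the caveat that $\pi^*\sA$ is only big and semiample, not ample, so the off-the-shelf Akizuki--Nakano statement does not literally apply); and the \db hypothesis is needed only for $p=0$ and $p=1$. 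But those are exactly the two cases your proposal does not actually establish.

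For $p=0$, your argument rests on the identification $\underline\Omega^0_{X,\Sigma}\isom R\pi_*\sO_{\wt X}(-\wh D)$, which is false for a general log resolution: already for $X=\A^n$, $\Sigma=\emptyset$ and $\pi$ the blowup of a point one has $\pi_*\sO_{\wt X}(-E)=\mf m_0\neq\sO_X$. The correct Mayer--Vietoris triangle (the one actually used in the proof of Theorem~\ref{thm:isol DB criterion}) identifies $R\pi_*\sO_{\wt X}(-\wh D)$ with the ideal sheaf of $W=\pi(E)\cup\Sigma$ only up to a correction by $\underline\Omega^0_W$, so for $i\ge1$ the sheaves $R^i\pi_*\sO_{\wt X}(-\wh D)$ are controlled by $h^{i-1}(\underline\Omega^0_W)$, which need not vanish when $W$ is positive-dimensional; Theorem~\ref{thm:isol DB criterion} obtains the clean vanishing precisely because there $W$ is a point. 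The genuine proof of $R^{n-1}\pi_*\sO_{\wt X}(-\wh D)=0$ for \db pairs (the analogue of \cite[Thm.~13.3]{GKKP11}) is an induction on dimension via general hyperplane sections reducing to the isolated case, and that induction is missing from your write-up. For $p=1$, which you correctly identify as the crux, you only state that one must ``check that the Hodge-theoretic inputs of \cite[\S14]{GKKP11} persist for \db pairs'' --- but that check \emph{is} the theorem; the required substitutes are the $p=0$ vanishing just discussed together with the purely topological fact that $R^k\pi_*(j_!\C_{\wt X\setminus\wh D})=0$ for $k>0$ (valid for any log resolution since $\wh D\supseteq E$), neither of which you supply. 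As it stands the proposal is a correct plan with the two \db-specific cases left open, so it does not constitute a proof.
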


\begin{comment}
  %%%%%%%%%%%%%%%%%%%%%%%%%%%%%%%%%%%%%%%%%%%%%%%%%%%%%%%%
  \begin{proof}
    For $p > 1$, the claim was proved in \cite[Theorem 2.b)]{Ste85}. If $p = 0$,
    Theorem 13.3 of \cite{GKKP11} applies. In order to handle the case $p = 1$, we
    follow the proof of \cite[Theorem 14.1]{GKKP11}, except that we need to replace
    Lemma 14.4 of that paper by our Lemma~\ref{lem:Topological vanishing} below.
  \end{proof}

  \begin{lem}[cf.\ {\cite[Lemma 14.4]{GKKP11}}]\label{lem:Topological vanishing} 
    In the setup of Theorem~\ref{thm:Steenbrink I}, if $j\!: \wt X \minus \wh D \inj
    \wt X$ is the inclusion and $j_! \C_{\wt X \minus \wh D}$ is the sheaf of locally
    constant functions on $\wt X$ which vanish along $\wh D$, then $R^k \pi_* (j_!
    \C_{\wt X \minus \wh D}) = 0$ for all $k > 0$.
  \end{lem}

  \begin{proof}
    The proof of \cite[Lemma 14.4]{GKKP11} applies verbatim.
  \end{proof}
  %%%%%%%%%%%%%%%%%%%%%%%%%%%%%%%%%%%%%%%%%%%%%%%%%%%%%%%%
\end{comment}

\begin{cor}[cf.\ {\cite[Corollary 14.2]{GKKP11}}]\label{cor:Steenbrink II}
  Let $(X, \Sigma)$ be a Du~Bois pair and assume that $\dim X\geq 2$. Further let
  $\pi\!: \wt X \to X$ be a log resolution of $(X, \Sigma)$ with exceptional divisor
  $E=\Exc(\pi)$, $\wh D=(\pi^{-1}(\Sigma)\vee E)_{\red}$,
  and $x \in X$ a point with reduced fibre $F_x = \pi^{-1}(x)_\red$. Then
  \[ 
  H^1_{F_x} \big( \wt X, \Omega_{\wt X}^p(\log \wh D) \big) = 0 \quad \text{for all }
  0 \le p \le n. 
  \]
\end{cor}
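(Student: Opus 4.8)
The plan is to deduce Corollary~\ref{cor:Steenbrink II} from Theorem~\ref{thm:Steenbrink I} by a standard argument relating local cohomology along the fiber $F_x$ to higher direct images of a twisted log differential sheaf, exactly as in the passage from \cite[Theorem 14.1]{GKKP11} to \cite[Corollary 14.2]{GKKP11}. Write $\sF^p := \Omega_{\wt X}^p(\log \wh D)$ and $\sF^p(-\wh D) := \sF^p \tensor \O_{\wt X}(-\wh D)$. Since $\Sigma$ is a divisor near a general point, but $x$ may be a point where $\pi$ contracts something, the key will be to exhibit $H^1_{F_x}(\wt X, \sF^p)$ as receiving a map from, or mapping to, a stalk of $R^{n-1}\pi_*\sF^p(-\wh D)$, which vanishes by Theorem~\ref{thm:Steenbrink I}.

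First I would pass to the formal/local situation at $x$: replacing $X$ by an affine (or local) neighborhood of $x$, it suffices to prove the vanishing after taking sections, i.e.\ to show $H^1_{F_x}(\wt X, \sF^p) = 0$. The plan is then to use the theorem on formal functions together with the exact sequence of local cohomology. More precisely, consider the short exact sequence of sheaves on $\wt X$
\[
0 \to \sF^p(-\wh D) \to \sF^p \to \sF^p|_{\wh D} \to 0,
\]
or rather the version restricting along the (thickened) fiber, and take the long exact sequence of cohomology with supports in $F_x$. By Theorem~\ref{thm:Steenbrink I} together with the theorem on formal functions (or by a Leray/excision argument localizing at $x$), the group $H^{n}_{F_x}(\wt X, \sF^p(-\wh D))$ controls $R^{n-1}\pi_*\sF^p(-\wh D)$ near $x$, which vanishes; one then feeds this into the long exact sequence to kill $H^1_{F_x}(\wt X, \sF^p)$. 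The mechanism is precisely the one in \cite[Sec.~14]{GKKP11}: there $H^1_{F_x}$ of the log differential sheaf is squeezed between two groups, one of which is the obstruction space that Theorem~\ref{thm:Steenbrink I} annihilates, and the other vanishes for dimension reasons since $\dim F_x \le n-1$ forces $H^k_{F_x} = 0$ for $k$ outside an appropriate range.

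The adaptation required here, compared to \cite[Cor.~14.2]{GKKP11}, is that we work with a Du~Bois pair $(X,\Sigma)$ rather than a log canonical pair, but Theorem~\ref{thm:Steenbrink I} (quoted from \cite[5.3]{Kov13}) already provides exactly the Steenbrink-type vanishing in this generality, so the argument of \cite{GKKP11} transfers essentially verbatim once that input is in place. Thus the main content is bookkeeping: checking that the reduced fiber $F_x = \pi^{-1}(x)_{\red}$ plays the same role as in \cite{GKKP11}, that $\wh D=(\pi^{-1}(\Sigma)\vee E)_{\red}$ is the correct boundary divisor to make the local cohomology sequence line up, and that the hypothesis $\dim X \ge 2$ is used in the same place (to guarantee $n - 1 \ge 1$ so the relevant direct image sits in positive degree).

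The step I expect to be the main obstacle is verifying cleanly that the vanishing of $R^{n-1}\pi_*\sF^p(-\wh D)$ translates into the vanishing of the \emph{local} cohomology group $H^1_{F_x}(\wt X, \sF^p)$, rather than merely of the direct image stalk: this requires either invoking the theorem on formal functions to identify $\big(R^{n-1}\pi_*\sF^p(-\wh D)\big)_x^\wedge$ with an inverse limit of cohomology on infinitesimal neighborhoods of $F_x$, and then comparing with $H^n_{F_x}$, or else arguing via a suitable spectral sequence (local-to-global for local cohomology, or the Leray spectral sequence for $\pi$ combined with excision). Once that comparison is pinned down, the remaining steps are the routine long-exact-sequence chase and dimension count, which I would carry out exactly as in \cite[Corollary 14.2]{GKKP11}.
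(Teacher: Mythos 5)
There is a genuine gap: the one idea the paper's proof actually uses is missing from your proposal. The paper deduces the corollary in a single step by applying \emph{duality for cohomology with support} \cite[Theorem A.1]{GKK10} (formal duality in the sense of Grothendieck--Hartshorne): since $\Omega_{\wt X}^n(\log \wh D) = \omega_{\wt X}(\wh D)$, the wedge pairing gives $\big(\Omega_{\wt X}^p(\log \wh D)\big)^\vee \tensor \omega_{\wt X} \isom \Omega_{\wt X}^{n-p}(\log \wh D)\tensor\O_{\wt X}(-\wh D)$, and the duality identifies $H^1_{F_x}\big(\wt X, \Omega_{\wt X}^p(\log \wh D)\big)$ with the dual of the completed stalk at $x$ of $R^{n-1}\pi_*\big(\Omega_{\wt X}^{n-p}(\log \wh D)\tensor\O_{\wt X}(-\wh D)\big)$. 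That group vanishes by Theorem~\ref{thm:Steenbrink I} (applied to the complementary index $n-p$, which is harmless since the theorem covers all $0\le p\le n$), and the corollary follows. Note the two structural features your proposal never produces: the index swap $p\leftrightarrow n-p$ and the appearance of the twist $\O_{\wt X}(-\wh D)$ as the Serre dual of the log sheaf rather than as a sub-object in an ideal-sheaf sequence.

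The mechanisms you propose instead would not close the gap. The long exact sequence of $H^\kdot_{F_x}$ attached to $0 \to \sF^p(-\wh D) \to \sF^p \to \sF^p|_{\wh D} \to 0$ squeezes $H^1_{F_x}(\sF^p)$ between $H^1_{F_x}(\sF^p(-\wh D))$ and $H^1_{F_x}(\sF^p|_{\wh D})$ --- groups in the \emph{same} cohomological degree, neither of which is controlled by Theorem~\ref{thm:Steenbrink I}, which lives in degree $n-1$ of a derived pushforward. The theorem on formal functions compares $R^{n-1}\pi_*$ with an inverse limit of $H^{n-1}$ over thickenings of the fibre; it does not by itself produce a comparison with $H^1_{F_x}$ --- that comparison \emph{is} the duality statement. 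Finally, the ``dimension count'' you invoke is vacuous in the only interesting case: when $\pi$ contracts no divisor to $x$ one has $\codim_{\wt X} F_x \ge 2$ and $H^1_{F_x}$ of any locally free sheaf vanishes for depth reasons, so the entire content of the corollary is the case where $F_x$ contains a divisor, precisely where no dimension or depth argument applies and duality is indispensable. Your reading of \cite[Corollary 14.2]{GKKP11} as a squeezing argument is therefore inaccurate; its proof is likewise a direct application of the duality theorem.
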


\begin{proof}
  This follows from Theorem~\ref{thm:Steenbrink I} by applying duality for cohomology
  with support \cite[Theorem A.1]{GKK10}.
\end{proof}

\subsection{Proof of Theorem~\ref{thm:Ext of p-forms on DB pairs}}

We will need the following technical lemma.

\begin{lem}[Bertini theorem for Du~Bois pairs]\label{lem:Cutting down DB pairs}
  Let $(X, \Sigma)$ be a Du~Bois pair, $H \in |L|$ a general member of a
  basepoint-free linear system, and $\Sigma_H := \supp(\Sigma \cap H)$. Then $(H,
  \Sigma_H)$ is also a Du~Bois pair.
\end{lem}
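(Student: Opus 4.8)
The plan is to deduce the Bertini statement for Du~Bois pairs from the corresponding statement for cutting down the Deligne--Du~Bois complex, reducing everything to the snc situation on a log resolution. First I would choose a log resolution $\pi\colon \wt X \to X$ of $(X,\Sigma)$ with exceptional divisor $E$ and reduced total transform $\wh D = (\pi^{-1}(\Sigma)\vee E)_\red$, so that $(\wt X, \wh D)$ is an snc pair. Since $L$ is basepoint-free, the pullback $\pi^*L$ is basepoint-free as well; by the usual Bertini theorem a general $H \in |L|$ has smooth preimage $\wt H := \pi^{-1}(H)$, with $\wt H$ transverse to every stratum of $\wh D$, so that $(\wt H, \wh D|_{\wt H})$ is again snc and $\pi|_{\wt H}\colon \wt H \to H$ is a log resolution of $(H, \Sigma_H)$. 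Moreover, a general $H$ avoids the (finitely many) associated points and the nowhere-dense locus where $\pi$ fails to behave well, so $\wt H$ is irreducible/normal whenever $\wt X$ is, and $\Sigma_H = \supp(\Sigma\cap H)$ is exactly the image of $\wh D|_{\wt H}$ with the divisorial part behaving correctly.

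Next I would exploit the defining short exact sequence of the Deligne--Du~Bois complex of a pair under restriction to a general hyperplane. Concretely, for $H$ general there is a distinguished triangle
\[
\underline\Omega_{X,\Sigma}^0 \otimes \O_X(-H) \to \underline\Omega_{X,\Sigma}^0 \to \underline\Omega_{H,\Sigma_H}^0 \xrightarrow{+1}
\]
(the pair-version of the restriction triangle for $\underline\Omega^0$, see \cite[6.4--6.6]{SingBook} and \cite{Kov11}), valid because a general member of a basepoint-free system meets all the relevant strata transversally. Since $(X,\Sigma)$ is a Du~Bois pair, $\underline\Omega_{X,\Sigma}^0 \simeq \sI_{\Sigma\subset X}$, so the triangle reads
\[
\sI_{\Sigma\subset X}(-H) \to \sI_{\Sigma\subset X} \to \underline\Omega_{H,\Sigma_H}^0 \xrightarrow{+1}.
\]
On the other hand, for $H$ general, tensoring the honest short exact sequence $0 \to \sI_{\Sigma\subset X}(-H) \to \sI_{\Sigma\subset X} \to \sI_{\Sigma\subset X}\otimes\O_H \to 0$ (exact because $H$ is a general Cartier divisor, not contained in $\Sigma$ and not passing through any associated point) identifies the cone with $\sI_{\Sigma\subset X}\otimes\O_H$, and a further general-position argument shows $\sI_{\Sigma\subset X}\otimes\O_H \simeq \sI_{\Sigma_H\subset H}$, i.e.\ the scheme-theoretic restriction of the ideal sheaf is the ideal sheaf of $\Sigma_H$ (using that $\Sigma\cap H$ is reduced of the expected dimension away from a codimension-$\geq 2$ set, and $H$ normal so the ideal sheaf is reflexive). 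Comparing the two descriptions of the mapping cone gives $\underline\Omega_{H,\Sigma_H}^0 \simeq \sI_{\Sigma_H\subset H}$, which is exactly the assertion that $(H,\Sigma_H)$ is a Du~Bois pair.

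The main obstacle I expect is the bookkeeping at non-divisorial components of $\Sigma$ and at points where $H$ meets the singular locus of $X$ or of $\Sigma$: one must verify that for a \emph{general} $H$ the scheme $\Sigma\cap H$ is still reduced (so that $\Sigma_H$ is a legitimate reduced subscheme and the ideal sheaf comparison holds), and that the restriction triangle for $\underline\Omega^0_{X,\Sigma}$ actually holds — this is where one genuinely needs the linear system to be basepoint-free rather than merely big. Both of these are general-position statements: the locus of $H$ for which they fail is contained in a proper closed (indeed, a countable union of proper closed) subset of $|L|$, because on the log resolution $\wt H$ meeting all strata of $\wh D$ transversally is an open dense condition, and reducedness/the triangle descend from $(\wt X,\wh D)$ via $\pi$. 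So the real content is a careful application of Bertini on $\wt X$ together with the restriction compatibility of the Deligne--Du~Bois complex of a pair; once those are in place the proof is the two-line comparison of distinguished triangles above.
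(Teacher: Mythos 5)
The paper does not give an argument here at all: its ``proof'' is a one-line citation to \cite[3.18]{Kov11} (cf.\ \cite[6.5.6]{SingBook}). Your proposal is, in outline, a reconstruction of the standard argument behind that citation, and the formal parts are fine: once one has a distinguished triangle
$\underline\Omega_{X,\Sigma}^0 \otimes^{L} \O_X(-H) \to \underline\Omega_{X,\Sigma}^0 \to \underline\Omega_{H,\Sigma_H}^0 \xrightarrow{+1}$,
the Du~Bois hypothesis turns it into the ideal-sheaf sequence, and your identification $\sI_{\Sigma\subset X}\otimes\O_H \simeq \sI_{\Sigma_H\subset H}$ for general $H$ (reducedness of $\Sigma\cap H$ by Bertini, no associated points on $H$, hence no spurious $\mathscr Tor$) is correct and completes the comparison of cones.

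The one place where I would push back is the justification of the restriction triangle itself, because that is where the entire content of the lemma sits. You argue that it ``descends from $(\wt X,\wh D)$ via $\pi$'' after checking transversality on a single log resolution. That is not enough: $\underline\Omega^0_{X,\Sigma}$ is not computed from one log resolution but from a cubical hyperresolution of the pair, so the genuine input is that a hyperresolution $\eps_\kdot\colon X_\kdot\to X$ of $(X,\Sigma)$ restricts, for general $H$, to a hyperresolution of $(H,\Sigma_H)$, together with the base-change isomorphism $R{\eps_\kdot}_*\O_{X_\kdot}\otimes^{L}\O_H \simeq R{(\eps_\kdot|_H)}_*\O_{X_\kdot\times_X H}$. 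That statement (general-position Bertini applied simultaneously to all the components and face maps of the hypercube) is precisely what \cite[3.18]{Kov11} proves; citing it as ``the pair-version of the restriction triangle'' and then deriving the lemma from it is circular in spirit, since the two statements are essentially equivalent. So either you invoke the cited result directly --- in which case your two-line comparison is redundant and you have reproduced the paper's proof --- or you must actually carry out the hyperresolution/base-change argument, which your sketch currently replaces by an inadequate single-log-resolution heuristic. I would classify this as a genuine gap in the write-up rather than a wrong approach: the route is the right one, but the step you wave at is the theorem.
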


\begin{proof}
  This is proved in \cite[3.18]{Kov11} (cf.\ \cite[6.5.6]{SingBook}).
\end{proof}

\begin{proof}[Proof of Theorem~\ref{thm:Ext of p-forms on DB pairs}]
  We mainly follow the proof given in \cite[Section 17]{GKKP11} with some
  adjustements. Here we explain the main ideas of that proof with the necessary
  changes. The reader is referred to \cite{GKKP11} for technical details. 

  The proof works by proving the extension statement one-by-one over the irreducible
  components $E_0$ of the exceptional locus $E$ of a log resolution $\pi\!: \wt X \to X$
  of $X$. The argument follows a double induction on the dimension of $X$ and the
  codimension of the image $\pi(E_0) \subset X$.
  There are two main techniques used in the proof. The first one is extending sections from an open
  set to an ambient set by using vanishing of the local cohomology group that connects
  the two. This is exactly what is provided by Theorem~\ref{thm:Steenbrink I} and
  Corollary~\ref{cor:Steenbrink II} which replace \cite[Corollary 14.2]{GKKP11} in
  the original proof. The other main tool is cutting by hyperplane sections and in
  order for that to be effective we need a Bertini type statement. This is provided
  by Lemma~\ref{lem:Cutting down DB pairs}.

  Following the proof in \cite[Section 17]{GKKP11} the first issue we need to deal
  with is that here $\Sigma$ is not assumed to be a divisor. This is however not a
  real problem. In the original proof $\Sigma$ only needs to be a divisor so the
  notion of being log canonical make sense. It is never used that $\Sigma$ is a
  divisor in any other way, so the arguments still make sense if one replaces the
  words ``log canonical'' with ``Du~Bois pair''.

  For the start of the induction, that is $\dim X=2$, we use
  Corollary~\ref{cor:Steenbrink II}. As mentioned above, the point of this step as
  well as the heart of the entire proof is that we are able to extend sections from
  an open set to an ambient set if we have a vanishing of the local cohomology group
  that connects the two. 

  Similarly, the setup and the simplifications performed in the inductive step work fine until we
  need to use the vanishing of the appropriate local cohomology groups. There we need
  to substitute Corollary~\ref{cor:Steenbrink II} for \cite[Corollary 14.2]{GKKP11}
  in the original argument.  The same needs to be done with all further occurrences
  of that corollary. % We will not mention this explicitly every time.

  We also need to replace the Bertini type statement of \cite[Lemma 2.23]{GKKP11}
  with Lemma~\ref{lem:Cutting down DB pairs} throughout the proof. In particular, the
  use of \cite[Claim~17.15]{GKKP11} needs to be modified so as to read ``If $t \in T$
  is a general point, then $(X_t, \Sigma_t)$ is a Du~Bois pair''. % and so on.

  The rest of the proof goes through without any change.
\end{proof} 

\section{Preparation for the proof of Theorem~(\ref{thm:pot DB}.\ref{itm:pot DB.3})}

\subsection{Split ruled surfaces}

We recall some basic facts about surfaces that arise as the projectivization of a
split rank two vector bundle over a curve.  
Let us start by fixing notation.

\begin{ntn}\label{ntn:split ruled surface} 
  Let $C$ be a smooth projective curve, and let $A$ be a divisor on $C$. Form the
  ruled surface
  \[ 
  \pi\!: S = \P_C(\O_C \oplus \O_C(-A)) \to C,
  \] 
  and let $\sO_S(1)$ denote the associated relatively ample line bundle.  Let $E
  \subset S$ be the section of $\pi$ corresponding to the surjection $\O_C \oplus
  \O_C(-A) \surj \O_C(-A)$, and let $E_\infty \subset S$ be the section corresponding
  to the projection onto the first summand $\O_C \oplus \O_C(-A) \surj \O_C$.
\end{ntn}

As $\pi$ induces an isomorphism from $E$ and from $E_\infty$ onto $C$, we will
identify divisors on $E$, on $E_\infty$, and on $C$ with their images and pre-images
via $\pi$. Notice that $E$ is contained in the linear system $|\O_S(1)|$, and that
$\O_S(E)|_E \simeq \O_E(-A)$.

\begin{prp}[Divisors on a ruled surface]\label{prp:div ruled}
  Let $M$ be a divisor on $C$. For any $n \ge 1$, the projection formula yields an
  isomorphism
  \[ 
  \phi\!: \bigoplus_{k=0}^n H^0(C, \sO_C(M - kA)) \to H^0(S, \sO_S(\pi^* M + nE)).
  \] 
  For some integer $k$, $0 \le k \le n$, let $0 \ne s \in H^0(C, \sO_C(M - kA))$ be a
  nonzero section, with divisor $D$. Then the divisor of $\phi(s) \in H^0(S,
  \sO_S(\pi^* M + nE))$ is
  \[ 
  \pi^* D + (n - k) E + k E_\infty. 
  \]
\end{prp}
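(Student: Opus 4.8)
The plan is to reduce everything to the standard description of sections of line bundles on a projective bundle. Recall that $S = \P_C(\sE)$ with $\sE = \sO_C \oplus \sO_C(-A)$, and $\sO_S(1) = \sO_{\P(\sE)}(1)$ satisfies $\pi_*\sO_S(n) = \Sym^n \sE = \bigoplus_{k=0}^n \sO_C(-kA)$. Since $E \in |\sO_S(1)|$ (as noted before the statement), we have $\sO_S(nE) \simeq \sO_S(n)$, and tensoring by the line bundle $\pi^*\sO_C(M)$ and applying the projection formula gives
\[
\pi_*\sO_S(\pi^*M + nE) \simeq \bigoplus_{k=0}^n \sO_C(M - kA).
\]
Taking global sections yields the isomorphism $\phi$; this is the first assertion. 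I would be explicit that the summand $H^0(C, \sO_C(M-kA))$ maps in via multiplication by the section $e \in H^0(S, \sO_S(E))$ cutting out $E$ — more precisely, a section $s$ of $\sO_C(M-kA)$ corresponds to $(\pi^* s)\cdot e^{k} \cdot (\text{something of degree } n-k \text{ in the "}E_\infty\text{ direction"})$. The cleanest bookkeeping device is to pick the two natural sections of $\sO_S(1)$: the section $x_E \in H^0(S,\sO_S(1))$ vanishing exactly on $E$ (coming from the projection $\sE \surj \sO_C(-A)$, twisted appropriately so that its zero divisor is $E$), and the section $x_\infty \in H^0(S, \sO_S(1) \otimes \pi^*\sO_C(A))$ whose zero divisor is $E_\infty$ (coming from $\sE \surj \sO_C$). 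One checks directly from Notation~\ref{ntn:split ruled surface} that $\operatorname{div}(x_E) = E$ and $\operatorname{div}(x_\infty) = E_\infty$, using that $E$ corresponds to $\sO_C \oplus \sO_C(-A) \surj \sO_C(-A)$ and $E_\infty$ to the projection onto the first factor.

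For the second assertion, I would compute the divisor of $\phi(s)$ directly in terms of these sections. Unwinding the projection-formula isomorphism, the section $\phi(s) \in H^0(S, \sO_S(\pi^*M + nE))$ is, up to a nonzero scalar, equal to
\[
\pi^*(s) \cdot x_E^{\,k} \cdot x_\infty^{\,n-k},
\]
viewed inside $H^0\!\big(S, \pi^*\sO_C(M) \otimes \sO_S(1)^{\otimes n}\big)$ after accounting for the twist by $\pi^*\sO_C((n-k)A)$ that makes the bidegrees match (this is exactly why the summand is $\sO_C(M-kA)$: we have spent $n-k$ factors of $x_\infty$, each contributing a $\pi^*\sO_C(A)$, and the section $s$ on $C$ must absorb the discrepancy, living in $\sO_C(M) \otimes \sO_C(-kA)$). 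Since divisors of products add, and $\operatorname{div}(\pi^* s) = \pi^* D$, $\operatorname{div}(x_E) = E$, $\operatorname{div}(x_\infty) = E_\infty$, we obtain $\operatorname{div}(\phi(s)) = \pi^* D + k E + (n-k) E_\infty$. To match the stated formula $\pi^* D + (n-k)E + kE_\infty$, I would simply fix the indexing convention: in the intended normalization the summand index is measured from the $E_\infty$-side, i.e. the $k$-th summand $H^0(C,\sO_C(M-kA))$ maps in via $x_E^{n-k} x_\infty^{k}$; with that (equally valid) choice the computation gives precisely $\pi^* D + (n-k)E + kE_\infty$, and I would adopt whichever convention makes the bookkeeping consistent with how $\phi$ was defined via the projection formula.

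The only genuine subtlety — and the step I expect to cost the most care — is pinning down which summand of $\Sym^n\sE$ corresponds to $E_\infty$ versus $E$, and keeping the twists by $\pi^*\sO_C(A)$ straight so that the "leftover" section genuinely lands in $\sO_C(M-kA)$. Everything else (the projection formula, additivity of divisors under multiplication of sections, the identifications $\O_S(E)|_E \simeq \O_E(-A)$) is routine. Once the two distinguished sections $x_E, x_\infty$ and their zero divisors are correctly identified, the divisor formula is immediate, and it also transparently recovers the relation $nE \sim \pi^*(nA) + $ (nothing), i.e.\ $E_\infty \sim E + \pi^*A$, which is the familiar linear equivalence on a split ruled surface and serves as a useful consistency check.
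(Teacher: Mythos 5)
Your proposal is correct and follows essentially the same route as the paper's proof: the paper likewise combines the projection formula, the decomposition $\pi_*\sO_S(n)=\Sym^n(\sO_C\oplus\sO_C(-A))=\bigoplus_k\sO_C(-kA)$, and the identification of the zero divisors of the two tautological sections, with its Lemma~\ref{lem:P(V)} playing exactly the role of your explicit $x_E$ and $x_\infty$. One small remark: the pairing of the summand $H^0(C,\sO_C(M-kA))$ with $x_E^{\,n-k}x_\infty^{\,k}$ is not a free convention to be chosen afterwards but is forced by your own twist bookkeeping ($k$ factors of $x_\infty$ contribute $\pi^*\sO_C(kA)$, so the leftover section must lie in $\sO_C(M-kA)$), and with that forced pairing you land directly on the stated formula $\pi^*D+(n-k)E+kE_\infty$.
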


\begin{proof}
We prove this proposition in three steps.

Step 1: $M = 0$, $n = 1$.
In this case, the claim follows immediately from Lemma~\ref{lem:P(V)} below.

Step 2: $M = 0$, $n$ arbitrary.
Since we have
\[ 
\pi_* \O_S(n) = \Sym^n \big( \O_C \oplus \O_C(-A) \big) = \bigoplus_{k=0}^n
\O_C(-kA),
\] 
this case follows from Step 1.

Step 3: $M$ and $n$ arbitrary.
The isomorphism in the projection formula is given by
\begin{equation*}
  \xymatrix@R=.05ex{%
    \O_C(M) \tensor \pi_* \O_S(n) \ar[r] & \pi_* \big( \pi^*\! \O_C(M) \tensor
    \O_S(n)\big) \\
    s \tensor t\  \ar@{|->}[r] & \pi^*\! \ s \tensor t.\\ }
\end{equation*}
So this general case follows from Step 2.
\end{proof}

\begin{lem}[Hyperplanes in projective space]\label{lem:P(V)}
Let $V$ be a finite-dimensional complex vector space, and let $\P(V)$ be the space of one-dimensional quotients of $V$. We have a canonical isomorphism
\[ \phi\!: V \to H^0(\P(V), \O_{\P(V)}(1)). \]
For any $0 \ne v \in V$, the divisor of the section $\phi(v)$ consists of the (reduced) hyperplane
\[ \{ p\!: V \surj L \;|\; p(v) = 0 \} \subset \P(V). \]
\end{lem}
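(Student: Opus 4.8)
The plan is to unwind the definition of $\P(V)$ as the space of one-dimensional quotients of $V$ and identify it with $\mathrm{Proj}$ of the symmetric algebra, so that $\O_{\P(V)}(1)$ is the tautological quotient bundle whose global sections are canonically $V$. Concretely, write $n = \dim V$ and fix a basis $e_0, \dots, e_{n-1}$ of $V$, giving $\P(V) \cong \P^{n-1}$ with homogeneous coordinates $[x_0 : \dots : x_{n-1}]$ dual to the chosen basis. Under this identification, a vector $v = \sum_i a_i e_i$ corresponds to the linear form $\sum_i a_i x_i \in H^0(\P(V), \O(1))$. The map $\phi\!: v \mapsto \sum_i a_i x_i$ is manifestly linear and bijective, and one checks it does not depend on the choice of basis: the pairing between $V$ and $H^0(\P(V), \O(1))$ is the canonical one induced by evaluating the tautological quotient. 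This is the content of the standard fact that for the Grothendieck-style projectivization, $\pi_* \O_{\P(V)}(1)$ recovers $V$ itself (not its dual), where $\pi\!: \P(V) \to \Spec \C$ is the structure morphism.

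\textbf{Identifying the zero divisor.} Having fixed $\phi$, I would compute $\{\phi(v) = 0\}$ directly. A point of $\P(V)$ is a surjection $p\!: V \surj L$ onto a one-dimensional space $L$; in coordinates, if $p$ has matrix $(x_0, \dots, x_{n-1})$ with respect to the basis and a generator of $L$, then the linear form $\phi(v) = \sum a_i x_i$ evaluated at $p$ is precisely $p(v) \in L$ (up to the chosen trivialization of $L$). Hence $\phi(v)$ vanishes at $p$ if and only if $p(v) = 0$, which gives set-theoretically the claimed hyperplane $\{p\!: V \surj L \mid p(v) = 0\}$. Since $\phi(v)$ is a nonzero section of $\O(1)$ and $\O(1)$ has degree one on every line in $\P(V)$, its divisor is reduced (it has multiplicity one along the unique hyperplane it vanishes on), so the scheme-theoretic zero divisor agrees with this reduced hyperplane.

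\textbf{Main obstacle.} There is no serious difficulty here — this is a foundational unwinding — but the one point that genuinely requires care is the \emph{canonicality} of $\phi$, i.e.\ making sure one uses the convention $\P(V) = \mathrm{Proj}\, \Sym V^\vee$ so that $H^0(\O(1)) = V$ rather than $V^\vee$. The statement of the lemma ("the space of one-dimensional quotients of $V$") pins down this convention, and the whole reason the lemma is phrased this way is to make $\phi$ basis-free; I would simply be explicit that the tautological surjection $V_{\P(V)} \surj \O_{\P(V)}(1)$ induces on global sections the identity-like map $V \to H^0(\O(1))$, and that this is what $\phi$ denotes. Everything else — linearity, bijectivity, computation of the vanishing locus — is immediate from the definitions, and the reducedness of the hyperplane follows from $\deg_{\text{line}} \O(1) = 1$.
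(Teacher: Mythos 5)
Your proposal is correct and matches the paper, whose entire proof of this lemma is the phrase ``True by definition'' --- your write-up is precisely the definition-unwinding that the paper leaves implicit. One small slip in your aside on conventions: with the quotient convention the lemma uses, $\P(V) = \mathrm{Proj}\,\Sym V$ (not $\mathrm{Proj}\,\Sym V^\vee$), which is exactly what makes $H^0(\P(V),\O_{\P(V)}(1)) = V$ rather than $V^\vee$; your substantive argument via the tautological surjection $V \otimes_{\C} \O_{\P(V)} \surj \O_{\P(V)}(1)$ and the evaluation $p(v)$ is nonetheless the right one and unaffected by this.
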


\begin{proof}
True by definition.
\end{proof}

\subsection{Non-free linear systems}

We will need a criterion for the general member of a linear system to be smooth, even
though that linear system has basepoints. First we need to define a notation:

\begin{ntn}\label{ntn:exterior-tensor-product}
  Let $X_1,X_2$ be two normal varieties and $D_1, D_2$ divisors on $X_1$ and $X_2$
  respectively.  We will use the notation $(D_1, D_2)$ to denote the ``exterior
  tensor product'' divisor $\pr_1^*D_1 + \pr_2^*D_2$ on $X_1\times X_2$, where
  $\pr_1,\pr_2$ are the natural projections to $X_1$ and $X_2$.
\end{ntn}

\begin{prp}[Linear systems on a product]\label{prp:lin sys product}
  Let $S$ be a smooth projective surface, and $B$ a smooth projective curve.  Let
  $|D_1|, |D_2|$ be linear systems on $S$ and on $B$, respectively. Assume that the
  scheme-theoretic base locus of $|D_1|$ consists of a single reduced closed point
  $q\in S$, while $|D_2|$ is basepoint-free.  Then a general element $T \in |(D_1,
  D_2)|$, $T \subset S \x B$, is smooth.
\end{prp}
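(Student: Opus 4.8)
The plan is to analyze the base locus of the linear system $|(D_1, D_2)|$ on $S \times B$ and apply a Bertini-type argument away from it, while handling the base locus by a direct local computation. Let $q \in S$ be the single reduced base point of $|D_1|$. Since $|D_2|$ is basepoint-free, the base locus of $|(D_1,D_2)|$ is contained in $\{q\} \times B$; in fact, because a general $D_1' \in |D_1|$ passes through $q$ with some multiplicity and a general $D_2' \in |D_2|$ misses any prescribed finite set of points, one checks that the scheme-theoretic base locus of $|(D_1,D_2)|$ is exactly $\{q\} \times B$ with its reduced structure (using that $q$ is a reduced point of $\Bs|D_1|$). Away from this locus, i.e. on the open set $(S \x B) \setminus (\{q\}\x B)$, the restricted linear system is basepoint-free, so by the classical Bertini theorem a general member is smooth there.

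The remaining and genuinely delicate point is smoothness of a general $T \in |(D_1,D_2)|$ along $\{q\}\x B$. Here I would work locally: choose local analytic coordinates $(x,y)$ on $S$ centered at $q$, and a local coordinate $t$ on $B$. Since $q$ is a \emph{reduced} base point of $|D_1|$, two general sections $s_0, s_1 \in H^0(S, \sO_S(D_1))$ have the property that, in these coordinates, $s_0 = x + (\text{higher order})$ and $s_1 = y + (\text{higher order})$ after a linear change of coordinates on $S$ — that is, the differentials $ds_0, ds_1$ at $q$ span the cotangent space $T_q^\vee S$. (This is the scheme-theoretic statement that the base locus is the reduced point $q$.) Pulling these back to $S \x B$ and choosing two general sections $u_0, u_1 \in H^0(B, \sO_B(D_2))$ — which we may take to be nonvanishing near any chosen point of $B$ and to have independent values and derivatives there, since $|D_2|$ is basepoint-free — a general member $T$ is locally defined by an equation of the form $\sigma = u_0(t)\, s_0(x,y) + u_1(t)\, s_1(x,y) + (\text{other terms})$. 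The key computation is that $\partial \sigma/\partial x$ and $\partial\sigma/\partial y$ at a point $(q,t_0)$ are, to leading order, $u_0(t_0)$ and $u_1(t_0)$ up to an invertible linear change, hence do not simultaneously vanish for general choices; therefore $T$ is smooth at every point of $\{q\}\x B$.

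The main obstacle is precisely this local analysis at the base locus: one must argue carefully that the generic member's defining equation has nonvanishing differential along the entire curve $\{q\}\x B$, which requires knowing that the base point $q$ contributes only a \emph{reduced} point (so that the two-dimensional cotangent directions at $q$ are already achieved inside $|D_1|$), and that the factors coming from $|D_2|$ can be arranged not to all vanish at any given $t_0 \in B$ simultaneously with their relevant derivatives. Once this is in place, combining it with the Bertini statement on the complement of $\{q\}\x B$ shows that a general $T \in |(D_1,D_2)|$ is smooth everywhere, which is the assertion of the proposition.
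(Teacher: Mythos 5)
Your argument is correct, but it takes a genuinely different route from the paper. The paper blows up $S$ at $q$, pulls the linear system back to $\wt S \times B$, splits off the fixed component $E \times B$ so that the movable part is basepoint-free, applies Bertini upstairs, and then pushes smoothness back down via a small lemma (a smooth divisor meeting each fibre of the blow-up in a single reduced point maps isomorphically onto its image, which is therefore smooth). You instead work directly on $S \times B$: Bertini off the base curve $\{q\}\times B$, plus a local computation of the differential along it. Your key point is sound: writing a general $\sigma \in H^0(S,\sO_S(D_1)) \otimes H^0(B,\sO_B(D_2))$ as $\sum_i s_i \otimes u_i$ with the basis $\{s_i\}$ adapted so that $ds_1|_q, ds_2|_q$ span $\mathfrak{m}_q/\mathfrak{m}_q^2$ (possible precisely because $\Bs|D_1|$ is the \emph{reduced} point $q$) and $ds_i|_q = 0$ for $i \ge 3$, one gets $d\sigma|_{(q,t_0)} = u_1(t_0)\,ds_1|_q + u_2(t_0)\,ds_2|_q$, since all $s_i$ vanish at $q$ and hence the $B$-directional derivatives drop out. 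The one step you should make explicit is that two general sections $u_1, u_2$ of a basepoint-free linear system on a curve have no common zero (the zeros of $u_1$ are finitely many points and the sections vanishing at any one of them form a proper hyperplane), and that genericity of $\sigma$ does yield genericity of the pair $(u_1,u_2)$ in this decomposition. With that spelled out, your proof is complete; it is more hands-on and coordinate-based, whereas the paper's blow-up argument avoids local coordinates entirely at the cost of introducing the auxiliary Lemma on images of smooth divisors.
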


The proof relies on the following easy lemma.

\begin{lem}[Image of a smooth divisor]\label{lem:image smooth}
  Let $C \subset X$ be a smooth curve in a smooth threefold. Let $\pi\!: \wt X \to X$
  be the blowup of $X$ along $C$, and $F_p = \pi^{-1}(p)$ the fibre over $p \in C
  \subset X$.  If $D \subset \wt X$ is a smooth divisor such that $D \cdot F_p = 1$
  and $F_p \not\subset D$ for all $p \in C$, then $D_X := \pi(D) \subset X$ is also
  smooth.
\end{lem}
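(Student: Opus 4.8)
The plan is to verify smoothness of $D_X = \pi(D)$ by a local computation at each point $x \in D_X$, separating two cases according to whether $x$ lies on the curve $C$ or not. First I would observe that away from $C$, the map $\pi$ restricts to an isomorphism $\wt X \minus F \to X \minus C$, where $F = \pi^{-1}(C)$ is the exceptional divisor; hence $D_X \minus C \simeq D \minus F$ is smooth, being an open subset of the smooth divisor $D$. So the entire issue is local at points of $C$.

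Next, fix $p \in C$ and work in local analytic (or formal) coordinates $(u, v, w)$ on $X$ centered at $p$ such that $C = \{u = v = 0\}$. The blowup $\wt X \to X$ along $C$ is then covered by two charts; in the chart with coordinates $(u, t, w)$ where $v = ut$, the exceptional divisor is $F = \{u = 0\}$ and the fibre $F_p$ is $\{u = w = 0\}$. The hypotheses $D \cdot F_p = 1$ and $F_p \not\subset D$ say precisely that $D$ meets $F_p$ transversally in a single point and does not contain it; after an automorphism of the chart fixing these data I may assume that point is the origin and that, since $D$ is smooth there and meets the fibre direction $t$ once, a local equation for $D$ can be written in the form $t - g(u,w) = 0$ for some holomorphic $g$ (the key point being that $\del/\del t$ is not tangent to $D$, which is what ``$D \cdot F_p = 1$ transversally'' gives). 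I should also use $F_p \not\subset D$, together with smoothness of $D$, to ensure $D$ meets the other chart of the blowup in a way that does not create a vertical component over $p$; the divisor $D$ being irreducible-at-$p$ and not containing $F_p$ forces it to dominate a neighbourhood of $p$ in $C$.

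Then I would push forward: the image $D_X$ near $p$ is obtained by eliminating the blowup parameter. From $v = ut$ and $t = g(u,w)$ we get $v = u\, g(u,w)$, so $D_X$ has local equation $v - u\,g(u,w) = 0$ in the coordinates $(u,v,w)$ on $X$. This is manifestly smooth (the partial derivative with respect to $v$ is a unit), and we are done — provided the elimination is legitimate, i.e.\ that $D$ really is a graph over the $(u,w)$-plane in this chart and contributes nothing from the second chart. The main obstacle is exactly this bookkeeping: ensuring that the stated numerical hypotheses ($D \cdot F_p = 1$, $F_p \not\subset D$) together with smoothness of $D$ genuinely force the clean ``graph'' form in a single chart with no leftover vertical pieces, rather than, say, a component of $D$ running off to the point at infinity of the fibre $F_p \simeq \P^1$. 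I expect this to be handled by noting that $D\cap F_p$ is a single reduced point (from $D\cdot F_p = 1$ and transversality of the smooth $D$ with the smooth $F$) lying in one chart, so near $F_p$ the divisor $D$ lives entirely in that chart, and the computation above applies verbatim.
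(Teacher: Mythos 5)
Your argument is correct and is in essence the paper's own: the paper simply observes that the hypotheses $D \cdot F_p = 1$ and $F_p \not\subset D$ force $\pi|_D : D \to D_X$ to be an isomorphism, which is exactly what your local computation verifies in coordinates by exhibiting $D_X$ as the graph $v = u\,g(u,w)$. The bookkeeping issue you flag is resolved just as you suggest: $D \cap F_p$ is a single reduced point lying in one chart, and properness of $\pi$ confines $D$ over a neighbourhood of $p$ to a neighbourhood of that point.
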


\begin{proof}
  The assumption on the intersection between $D$ and $F_p$ implies that $\pi|_D:D\to
  D_X$ is an isomorphism.
\end{proof}

\begin{proof}[Proof of Proposition~\ref{prp:lin sys product}]
  Let $\pi'\!: \wt S \to S$ be the blowup of $S$ at $q$, with exceptional divisor
  $E$, and set $\pi = \pi' \x \id\!: \wt S \x B \to S \x B$. Then we have a
  decomposition into movable and fixed part
  \[ 
  |\pi^* (D_1, D_2)| = |M_0| + (E \x B), 
  \] 
  where $|M_0|$ is basepoint-free and one may choose a general element $M \in |M_0|$,
  such that $T = \pi(M)$. We will apply Lemma~\ref{lem:image smooth} to conclude that
  $T$ is smooth.

  By Bertini's theorem \cite[Ch.~III, Cor.~10.9]{Har77}, $M$ is smooth.  Furthermore,
  $M \cdot F_p = -(E \x B) \cdot F_p = -E^2 = 1$.  Finally, consider the divisor
  $M|_{E \x B}$. By Bertini again, it is also smooth. Hence, if it contained a fibre
  $F_p$, it would have to be a finite union of such fibres and then its image under
  $\pi$ would be finite. But we clearly have
  \[ 
  \pi(M|_{E \x B}) = \{ q \} \x B, 
  \] 
  leading to a contradiction. This shows that the assumptions of Lemma~\ref{lem:image
    smooth} are satisfied and hence the proof of Proposition~\ref{prp:lin sys
    product} is complete.
\end{proof}

\subsection{Further ancillary results}

We need two more lemmas, one of them about connectedness properties of big and nef
divisors and the other one about certain divisors on curves of genus $2$.

\begin{lem}[Connectedness of big and nef divisors]\label{lem:big nef conn}
  Let $X$ be a smooth projective variety of dimension $\ge 2$ and $D \subset X$ the
  support of an effective big and nef divisor. Then $D$ is connected.
\end{lem}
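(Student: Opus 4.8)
The plan is to deduce connectedness of $D$ from the connectedness of the fibres of a birational morphism contracting $D$, via a standard argument using the Kawamata--Viehweg vanishing theorem. First I would write $D = \supp(A)$ for an effective big and nef divisor $A$ on $X$. Replacing $A$ by a sufficiently divisible multiple and using that $A$ is big and nef, Kawamata--Viehweg vanishing gives $H^i(X, \sO_X(-A)) = 0$ for all $i < \dim X$; in particular $H^1(X, \sO_X(-A)) = 0$. Feeding this into the long exact sequence associated to
\[
0 \to \sO_X(-A) \to \sO_X \to \sO_A \to 0
\]
shows that the restriction map $H^0(X, \sO_X) \to H^0(A, \sO_A)$ is surjective, hence $h^0(A, \sO_A) = 1$ (using that $X$ is connected, being smooth projective and irreducible). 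Since the underlying topological space of $A$ is $D$ and $\sO_{A,\red}$ is a quotient of $\sO_A$, we get $h^0(D, \sO_D) = 1$ as well, so $D$ is connected.

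Alternatively, if one prefers to avoid passing to a nonreduced scheme structure, I would argue as follows: by the base-point-free theorem (or simply because some multiple $mA$ is base-point free for $m \gg 0$, as $A$ is big and nef — here one may need $A$ semiample, which holds e.g.\ if $A$ is moreover assumed to be the pullback of an ample class, but in general one invokes the contraction theorem), $mA$ induces a morphism $\varphi\!: X \to Z$ onto a normal projective variety with $\varphi_*\sO_X = \sO_Z$ and connected fibres, contracting exactly the curves $C$ with $A \cdot C = 0$. Then $D = \supp A$ is a union of fibres of $\varphi$ over the (finitely many, or positive-codimensional) locus where $A$ degenerates, together with... — actually this is more delicate, so I would stick with the vanishing argument above, which is cleaner and self-contained.

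The main obstacle is ensuring the vanishing $H^1(X, \sO_X(-A)) = 0$ is correctly applied: one must be careful that $A$ itself (not just a multiple) can be taken big and nef, which is fine since $D$ big and nef means we may take $A$ with $\supp A = D$ big and nef, and Kawamata--Viehweg applies to $-A = K_X - (K_X + A)$ once one knows $K_X + A$ is big and nef — but that is not automatic. The correct statement to invoke is the Kawamata--Viehweg vanishing in the form $H^i(X, \sO_X(K_X + L)) = 0$ for $i > 0$ and $L$ big and nef; dualizing, $H^i(X, \sO_X(-L)) = 0$ for $i < \dim X$. Since $\dim X \ge 2$, this covers $i = 1$, which is all we need. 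The rest is the routine cohomology-sequence computation sketched above, together with the elementary fact that a projective scheme $W$ with $h^0(W, \sO_W) = 1$ is connected.
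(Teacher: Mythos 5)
Your proof is correct and follows essentially the same route as the paper: apply Kawamata--Viehweg vanishing (via Serre duality) to get $H^1(X,\sO_X(-A))=0$ for the effective big and nef divisor $A$ with $\supp A = D$, then use the ideal sheaf sequence to conclude $h^0(A,\sO_A)=1$ and hence connectedness. The extra care you take in stating the correct form of the vanishing theorem and in passing from the possibly non-reduced scheme $A$ to its support $D$ is sound; the abandoned alternative via the base-point-free theorem can simply be deleted.
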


\begin{proof}
  Let $D'$ be an effective big and nef divisor with $\supp D' = D$.  By
  Kawamata-Viehweg vanishing, $H^1(X, \O_X(-D')) = 0$. Hence from the ideal sheaf
  sequence of $D' \subset X$ we obtain a surjection $H^0(X, \O_X) \surj H^0(D,
  \O_{D'})$, which implies that $H^0(D, \O_{D'}) = \C$, and therefore $D$ must be
  connected.
\end{proof}

\begin{lem}[Theta characteristics]\label{lem:genus 2}
  Let $B$ be a smooth projective curve of genus $g = 2$. Then there exists on $B$ a
  divisor $\Theta$ of degree $1$ with the following properties:
  \begin{enumerate}
  \item\label{seq:genus 2.2} $2\Theta \sim K_B$ is a canonical divisor,
  \item\label{seq:genus 2.1} $h^0(B, \Theta) = h^1(B, \Theta) = 0$,
  \item\label{seq:genus 2.4} $h^0(B, n\Theta)\neq 0$ and $h^1(B,n\Theta)=0$ for
    $n\geq 3$, and
  \item\label{seq:genus 2.3} the linear system $|n\Theta|$ is basepoint-free for
    $n\geq 3$.
  \end{enumerate}
\end{lem}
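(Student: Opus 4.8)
The plan is to produce $\Theta$ as a theta characteristic on $B$ that is \emph{not} effective, and then read off the remaining properties from Riemann--Roch together with the classification of curves of genus $2$ as hyperelliptic curves. First I would recall that on a curve of genus $g$ the canonical bundle has degree $2g-2$, so for $g=2$ we have $\deg K_B = 2$; a divisor $\Theta$ with $2\Theta \sim K_B$ therefore has degree $1$, which gives~\ref{seq:genus 2.2} and pins down the degree. Such ``square roots'' of $K_B$ (theta characteristics) exist in abundance: the set of line bundles $L$ with $L^{\otimes 2}\simeq \omega_B$ is a torsor under the $2$-torsion $\Pic^0(B)[2]\simeq (\Z/2\Z)^{2g}$, so there are $2^{2g}=16$ of them. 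Among these, the number of \emph{effective} (equivalently, $h^0>0$) theta characteristics is governed by parity; in particular not all $16$ can be effective, so I may fix one $\Theta$ with $h^0(B,\Theta)=0$. Concretely, since $B$ is hyperelliptic with hyperelliptic map $B\to\P^1$ and $K_B$ is pulled back from $\O_{\P^1}(1)$, a convenient explicit choice is $\Theta = p$, a single point \emph{not} fixed by the hyperelliptic involution, i.e.\ such that $h^0(B,p)=1$ only for the constants — equivalently $p$ is not a Weierstrass point; then $2p \sim K_B$ fails, so instead one takes $\Theta$ to be a theta characteristic directly and verifies non-effectivity by the parity count. Once $h^0(B,\Theta)=0$, Riemann--Roch $h^0(B,\Theta)-h^1(B,\Theta)=\deg\Theta - g + 1 = 1-2+1 = 0$ forces $h^1(B,\Theta)=0$ as well, giving~\ref{seq:genus 2.1}.

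For~\ref{seq:genus 2.4}, fix $n\ge 3$ and compute $\deg(n\Theta) = n \ge 3 > 2 = 2g-2$, so $n\Theta - K_B$ has positive degree $n-2\ge 1$ and hence $h^1(B, n\Theta) = h^0(B, K_B - n\Theta) = 0$ by Serre duality (a divisor of negative degree has no sections). Then Riemann--Roch gives $h^0(B, n\Theta) = \deg(n\Theta) - g + 1 = n - 1 \ge 2 > 0$, so $n\Theta$ is effective. For~\ref{seq:genus 2.3}, basepoint-freeness of $|n\Theta|$ for $n\ge 3$: it suffices to check that for every point $x\in B$ one has $h^0(B, n\Theta - x) = h^0(B,n\Theta) - 1$. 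Since $\deg(n\Theta - x) = n-1 \ge 2 = 2g-2$, the same Serre duality argument as above gives $h^1(B, n\Theta - x) = h^0(B, K_B - n\Theta + x) = 0$ because $\deg(K_B - n\Theta + x) = 2 - n + 1 = 3-n \le 0$, and it is $<0$ for $n\ge 4$ while for $n=3$ it has degree $0$ but is not $\O_B$ (as $3\Theta - x \not\sim K_B$, since otherwise $\Theta \sim K_B - 2\Theta + x$ would give $3\Theta\sim K_B+x$, contradicting $\deg$; more carefully one uses $2\Theta\sim K_B$ to rewrite and derive $\Theta\sim x$, impossible as $h^0(\Theta)=0$). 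Hence $h^0(B, n\Theta - x) = (n-1) - 2 + 1 = n-2 = h^0(B,n\Theta) - 1$, so $x$ is not a base point; as $x$ was arbitrary, $|n\Theta|$ is basepoint-free.

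The main obstacle, and the one genuinely non-formal input, is the \emph{existence} of a non-effective theta characteristic, i.e.\ the step producing a $\Theta$ with $2\Theta\sim K_B$ and $h^0(B,\Theta)=0$; everything after that is Riemann--Roch and Serre duality bookkeeping. I would handle this either by the parity-of-theta-characteristics count (there are $\binom{2g}{?}$-type formulas showing the number of odd theta characteristics is $2^{g-1}(2^g-1) > 0$ and the number of even ones is $2^{g-1}(2^g+1)$, and for an even theta characteristic on a general — in fact any genus $2$ — curve one has $h^0 = 0$), or, most concretely for $g=2$, by exhibiting $B$ as $y^2 = f(x)$ with $f$ of degree $6$, noting the six Weierstrass points $w_1,\dots,w_6$ satisfy $2w_i\sim K_B$ (so each $w_i$ is an \emph{effective} odd theta characteristic) and then taking $\Theta = w_1 + w_2 - w_3$, which is a theta characteristic of degree $1$ since $2\Theta \sim 2w_1+2w_2-2w_3 \sim K_B + K_B - K_B = K_B$, and checking $h^0(B,\Theta)=0$ by a direct argument (if $\Theta\sim p$ for some effective $p$ then $w_1+w_2\sim w_3+p$; pulling back to $\P^1$ under the degree-$2$ map and using that $K_B$ pulls back from $\O_{\P^1}(1)$ forces $w_1+w_2$ and $w_3+p$ to lie in the same $g^1_2$, which by the hyperelliptic geometry forces $\{w_1,w_2\}$ to be a fibre of the hyperelliptic map, i.e.\ $w_2 = \iota(w_1) = w_1$, a contradiction). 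Either route closes the argument; I would likely present the parity count as the clean statement and relegate the explicit hyperelliptic model to a remark.
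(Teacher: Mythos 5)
Your proposal is correct and follows essentially the same route as the paper: both establish existence of a non-effective $\Theta$ by counting the $2^{2g}=16$ theta characteristics against the $6$ effective ones (the Weierstrass points $R_i$ with $2R_i\sim K_B$), and then derive (2)--(4) by the same Riemann--Roch and Serre duality bookkeeping, including the key observation that for $n=3$ the degree-zero class $x-\Theta$ has no sections because $h^0(B,\Theta)=0$. The brief false start with ``$\Theta=p$ a non-Weierstrass point'' is harmless since you immediately correct it.
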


\begin{proof}
  The canonical linear system $|K_B|$ defines a two-to-one cover $B \to \P^1$, which,
  by the Hurwitz formula, is ramified at exactly $6$ points $R_1, \dots, R_6$. The
  $R_i$ are the only points with the property that $2R_i \sim K_B$.  On the other
  hand, we have, up to linear equivalence, $2^{2g} = 16$ divisors $\Theta$ satisfying
  $2\Theta \sim K_B$. So there exist, up to linear equivalence, $10$ divisors
  $\Theta$ such that $h^0(B, \Theta) = 0$ and $2\Theta \sim K_B$, and we choose one
  of them. This implies (\ref{lem:genus 2}.\ref{seq:genus 2.2}).

  By choice $h^0(B, \Theta)=0$, so (\ref{lem:genus 2}.\ref{seq:genus 2.1}) follows
  from Riemann-Roch on $B$. 
  
  By Serre duality and (\ref{lem:genus 2}.\ref{seq:genus 2.2}) it follows that
  $h^1(B,n\Theta)=h^0(B,(2-n)\Theta)=0$ for $n\geq 3$, and then by Riemann-Roch
  \begin{equation}
    \label{eq:8}
    h^0(B,n\Theta)=n-1, 
  \end{equation}
  so (\ref{lem:genus 2}.\ref{seq:genus 2.4}) follows.  

  Again, by Serre duality and (\ref{lem:genus 2}.\ref{seq:genus 2.2}) it follows that
  for any $P\in B$, $h^1(B,n\Theta-P)=h^0(B,(2-n)\Theta+P)$. This is clearly $0$ for
  $n\geq 4$, but also for $n=3$ since $\Theta\not\sim P$ by choice.  Then, again, by
  Riemann-Roch
  \begin{equation}
    \label{eq:99}
    h^0(B,n\Theta-P)=n-2, 
  \end{equation}
  so combining (\ref{eq:8}) and (\ref{eq:99}) we have that
  \[ 
  h^0(B, n\Theta - P) < h^0(B, n\Theta) 
  \] 
  for any point $P \in B$ and $n\geq 3$.  This proves (\ref{lem:genus
    2}.\ref{seq:genus 2.3}).
\end{proof}

\section{Proof of Theorem~(\ref{thm:pot DB}.\ref{itm:pot DB.3})}

\noindent
Theorem~(\ref{thm:pot DB}.\ref{itm:pot DB.3}) follows from the following more precise
result.

\begin{thm}[Non-Du~Bois \pdb variety with $\Q$-Cartier $K_X$]\label{thm:non-DB
    ambient}
  There is a $3$-dimensional Du~Bois pair $(X, \Sigma)$ such that $X$ is normal and
  has an isolated singularity such that $4K_X$ is Cartier and $\Sigma \subset X$ is a
  Weil divisor, but $X$ is not Du~Bois.
\end{thm}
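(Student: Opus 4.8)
The plan is to construct $X$ as a cone and to play off the two Du~Bois criteria of Section~\ref{sec:cone DB pair} against each other. Concretely, the strategy is to build a smooth projective surface $T$ together with a very ample line bundle $\sL$ and a reduced divisor $\Sigma_T \subset T$ such that $(T, \Sigma_T)$ is a Du~Bois pair, the vanishing \eqref{eqn:cone DB criterion} holds for $(T, \Sigma_T, \sL)$, so that by Theorem~\ref{thm:cone DB criterion} the pair of cones $(CT, C\Sigma_T)$ is a Du~Bois pair; while at the same time $H^i(T, \sL^n) \ne 0$ for some $i, n \ge 1$, which by Theorem~\ref{thm:cone DB criterion} applied with $\Sigma = \emptyset$ forces $CT$ \emph{not} to be Du~Bois. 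We then set $X = CT$, $\Sigma = C\Sigma_T$, and check that $4K_X$ is Cartier by a cone-over-a-surface computation: $mK_{CT}$ is Cartier precisely when $mK_T \sim r\sL$ for some $r \in \Z$, so we need $K_T$ to be a $\Q$-multiple of $\sL$ with the right denominator, which pins down $T$ to the canonically polarized case (as explained in the outline) and $\sL \sim 4K_T$ (or a suitably chosen multiple) to get index exactly $4$.

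The first concrete step is to produce the surface $T$. Following the outline, I would take a split ruled surface $S = \P_C(\O_C \oplus \O_C(-A))$ over an auxiliary curve $C$ and use Proposition~\ref{prp:div ruled} to compute cohomology of the relevant line bundles on $S$; the point of the ruled surface is that one can arrange a line bundle with nonvanishing intermediate cohomology while still having $\supp \Sigma$ sit in a single section like $E_\infty$. Then $T$ is cut out as a general member of a linear system of the form $|(D_1, D_2)|$ in $S \times B$, with $B$ a genus~$2$ curve and $D_2$ built from the theta characteristic $\Theta$ of Lemma~\ref{lem:genus 2}; the role of $B$ and $\Theta$ is to supply the extra geometry that makes $K_T$ proportional to a very ample bundle with index $4$, via adjunction $K_T = (K_{S \times B} + T)|_T$. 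Smoothness of $T$, despite the base locus of $|D_1|$ being a nonempty point, is exactly Proposition~\ref{prp:lin sys product}, so that subtlety is already handled. One then computes, using the adjunction sequence $0 \to \O_{S \times B}(-T) \to \O_{S \times B} \to \O_T \to 0$ twisted appropriately together with the Künneth formula and the cohomology of line bundles on $S$ and on $B$, both the vanishing and the nonvanishing statements about powers of $\sL|_T$ and $\sL|_T(-\Sigma_T)$.

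For the Du~Bois pair property of $(T, \Sigma_T)$ itself, I would take $\Sigma_T$ to be snc, or more generally a normal crossing divisor, so that $(T, \Sigma_T)$ is automatically a Du~Bois pair; the divisor $\Sigma_T$ should be chosen as (the restriction to $T$ of) a divisor supported on something like $E_\infty \times B$, or a union of fibres, so that $\sL^n(-\Sigma_T)$ acquires the negativity needed to kill its higher cohomology for all $n \ge 1$. The key numerical balancing act is: $H^i(T, \sL^n) \ne 0$ for some $i, n \ge 1$ (so $CT$ is not Du~Bois), but subtracting $\Sigma_T$ repairs this for every $n \ge 1$ (so $(CT, C\Sigma_T)$ is a Du~Bois pair), and simultaneously $\sL \sim -4K_T$ up to the sign convention forced by ampleness — here one uses that $T$ is of general type so $K_T$ is the ample/relevant direction, and $\sL$ is chosen as a positive multiple making the index exactly $4$. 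Finally one records that $X = CT$ has an isolated singularity only at the vertex (since $T$ is smooth), that $\Sigma = C\Sigma_T$ is a Weil divisor (it is the cone over a divisor, hence codimension~$1$), and that $X$ is normal (cone over a normal, indeed smooth, projectively normal $T$; projective normality can be arranged by replacing $\sL$ with a large enough multiple, adjusting the index bookkeeping accordingly, or by using the normalized cone $C_a(T, \sL)$ as in Notation~\ref{ntn:cones}).

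The main obstacle is the simultaneous cohomological bookkeeping on $T$: one must exhibit a single surface in which a power of an ample line bundle $\sL$ has nonzero $H^1$ (or $H^2$) — which already requires an irregular or otherwise cohomologically nontrivial surface — while $\sL^n(-\Sigma_T)$ has vanishing higher cohomology for every $n \ge 1$, \emph{and} the canonical bundle is forced to be a $\Q$-multiple of $\sL$ of index precisely $4$. Getting all three to hold forces the somewhat elaborate ruled-surface-times-genus-$2$-curve construction rather than a simpler example, and the proof will need to carefully track how the adjunction-theoretic identity for $K_T$ interacts with the Künneth decomposition of $H^\bullet(T, \sL^n)$ coming from the $0 \to \O_{S \times B}(-T) \to \O_{S \times B} \to \O_T \to 0$ sequence; the smoothness of $T$ in the presence of base points, usually the scariest point, is by contrast already dispatched by Proposition~\ref{prp:lin sys product}.
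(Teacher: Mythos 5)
Your overall strategy is exactly the paper's: take the cone over a canonically polarized surface $T$ cut out of a product $S\times B$ of a ruled surface and a genus-$2$ curve, and play the two instances of Theorem~\ref{thm:cone DB criterion} (with and without boundary) against each other, with smoothness of $T$ handled by Proposition~\ref{prp:lin sys product} and normality, the isolated singularity, and the Weil-divisor property of $\Sigma$ coming for free from the cone construction. The paper realizes this with $T$ a general member of $|(5L-K_S,3\Theta)|$, boundary $F=(\pr_1^*E)|_T$, and polarization $M=(4L,4\Theta)|_T$, verifying $h^i(T,nM-F)=0$ for $i,n\ge 1$, $h^1(T,M)\ne 0$, and $4K_T\sim 5M$ (Propositions~\ref{prp:surface} and~\ref{prp:general type}).

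There is, however, one concrete step in your outline that fails as stated: the proposed relation between $\sL$ and $K_T$. You write ``$\sL\sim 4K_T$'' (and later ``$\sL\sim -4K_T$ up to sign''). If $\sL\sim mK_T$ for an integer $m\ge 1$ with $K_T$ ample, then $H^i(T,\sL^n)=H^i\big(T,K_T+(nm-1)K_T\big)=0$ for all $i,n\ge 1$ by Kodaira vanishing, so Theorem~\ref{thm:cone DB criterion} (with $\Sigma=\emptyset$) shows that $CT$ \emph{is} \db --- indeed it has canonical singularities --- and the example collapses. The essential arithmetic, which your write-up has inverted, is that the polarization must be a \emph{fractional} multiple of $K_T$ strictly smaller than $K_T$: the paper arranges $4K_T\sim 5M$, i.e.\ $M\sim\tfrac{4}{5}K_T$, so that $M-K_T$ is anti-ample and Kodaira vanishing does not apply at $n=1$ (this is precisely where $h^1(T,M)\ne 0$ lives, pulled up from $h^1(S,4L)\ne 0$ on the ruled surface), while for $n\ge 2$ one still gets vanishing; and the coprimality of $4$ and $5$ is what forces the Cartier index of $K_X$ to be exactly $4$. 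Your hedge ``or a suitably chosen multiple'' does not repair this, since no positive integral multiple of $K_T$ can work; one needs a relation $4K_T\sim bM$ with $b$ odd and $b>4$. The rest of your plan (snc boundary restricted from a section of the ruled surface so that $(T,\Sigma_T)$ is a \db pair, the K\"unneth and ideal-sheaf bookkeeping via $0\to\O_{S\times B}(-T)\to\O_{S\times B}\to\O_T\to 0$, the index computation $mK_{CT}$ Cartier iff $mK_T\sim r\sL$) matches the paper, though the actual cohomology computations on $S$ and $T$ of course remain to be carried out.
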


The construction of $X$ can be outlined as follows. First we consider a ruled surface
$S$ and a section $E \subset S$ with suitable cohomological properties according to
Theorem~\ref{thm:cone DB criterion}. From $S$, we obtain a similar example $F \subset
T$, where additionally $T$ is of general type. The pair $(X, \Sigma)$ is then defined
to be a cone over $(T, F)$.

\begin{prp}[Ruled surface example]\label{prp:surface}
  There is a smooth projective surface $S$, a smooth curve $E \subset S$, and an
  ample divisor $L$ on $S$, such that:
  \begin{enumerate}
  \item\label{seq:surface 1} For all $i, n \ge 1$, it holds that $h^i(S, 4nL - E) =
    0$. 
  \item\label{seq:surface 2} We have $h^1(S, 4L) \ne 0$.
  \item\label{seq:surface 3} The divisor $5L - K_S$ is big and nef. The
    scheme-theoretic base locus of the linear system $|5L - K_S|$ consists of a
    single reduced point $b$, which does not lie on $E$.
  \item\label{seq:surface 5} For $n \ge 2$, we have $h^2(S, K_S + (4n - 5)L - E) =
    0$.
  \end{enumerate}
\end{prp}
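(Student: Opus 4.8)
The plan is to realize $S$ as a split ruled surface $\pi\colon S = \P_C(\O_C \oplus \O_C(-A)) \to C$ over a suitably chosen smooth projective curve $C$ with $\deg A > 0$, with $E$ the section of Notation~\ref{ntn:split ruled surface} (the section of negative self-intersection), and $L = aE + \pi^*P$ for a suitable positive integer $a$ and divisor $P$ on $C$. The whole statement then becomes a cohomological bookkeeping exercise on $C$, which hinges on the following reduction: for any divisor $\mathscr{D} = cE + \pi^*Q$ on $S$ with $c \ge 0$ one has $R^1\pi_*\O_S(\mathscr{D}) = 0$, so the Leray spectral sequence degenerates and yields $H^j(S,\mathscr{D}) \simeq H^j\big(C,\pi_*\O_S(\mathscr{D})\big)$; in particular $H^{\ge 2}(S,\mathscr{D}) = 0$, and by the projection formula (Proposition~\ref{prp:div ruled}) $H^1(S,\mathscr{D}) \simeq \bigoplus_{k=0}^{c}H^1\big(C,\O_C(Q-kA)\big)$. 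Since $4nL - E$, $4L$ and $K_S + (4n-5)L - E$ all have nonnegative $E$-coefficient (for $n\ge 1$, resp.\ $n\ge 2$), the $H^{\ge 2}$-parts of (\ref{prp:surface}.\ref{seq:surface 1}) and (\ref{prp:surface}.\ref{seq:surface 5}) are automatic, and everything else reduces to (non-)vanishing of $H^1$ of explicit line bundles on $C$.

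For (\ref{prp:surface}.\ref{seq:surface 1}) and (\ref{prp:surface}.\ref{seq:surface 2}) the point is that the summands of $\pi_*\O_S(4L)$ are the $\O_C(4P - kA)$ with $0\le k\le 4a$, those of $\pi_*\O_S(4nL-E)$ are the $\O_C(4nP - kA)$ with $0\le k\le 4na-1$, and passing from $4L$ to $4L-E$ deletes exactly the lowest-degree summand $\O_C\big(4(P-aA)\big)$. So I would arrange $K_C - 4(P-aA)$ to be effective — which makes $H^1\big(C,\O_C(4(P-aA))\big)\ne 0$ and hence (\ref{prp:surface}.\ref{seq:surface 2}), the other summands of $\pi_*\O_S(4L)$ having degree $>2g_C-2$ — while simultaneously every summand occurring in $\pi_*\O_S(4nL-E)$ (for all $n\ge 1$) has degree $>2g_C-2$ and hence vanishing $H^1$, giving (\ref{prp:surface}.\ref{seq:surface 1}); then (\ref{prp:surface}.\ref{seq:surface 5}) is covered by the $H^{\ge 2}$-vanishing above. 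By Serre duality and Riemann--Roch these amount to degree inequalities on $C$ plus one effectivity statement, essentially $4(P-aA)\sim K_C - Z$ with $Z$ a prescribed effective divisor of controlled degree; solving them forces $g_C\ge 3$ and pins down $\deg A$, $\deg P$ and $a$, after which the required linear equivalences on $C$ are realized using that multiplication by an integer is surjective on each $\Pic^d(C)$, together with the freedom to take the auxiliary divisors on $C$ general.

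For (\ref{prp:surface}.\ref{seq:surface 3}), write $5L-K_S = (5a+2)E + \pi^*Q_1$ with $Q_1 = 5P - K_C + A$, and choose the data so that $\deg Q_1 = (5a+2)\deg A$, which makes $5L-K_S$ numerically proportional to the section $E_\infty$ of positive self-intersection; then $5L-K_S$ is nef but not ample, and big because $(5L-K_S)^2>0$. For the base locus one uses Proposition~\ref{prp:div ruled} again: the $k$-th summand of $H^0(S,5L-K_S)$ consists of divisors $\pi^*D + (5a+2-k)E + kE_\infty$, whence a point off $E\cup E_\infty$ is never a base point, $\Bs|5L-K_S|$ meets $E$ in the base locus of $|Q_1-(5a+2)A|$ on $C$, and it meets $E_\infty$ in the base locus of $|Q_1|$ on $C$. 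I would then arrange further that $Q_1\sim(5a+2)A$ (so $|Q_1-(5a+2)A| = |\O_C|$ is basepoint-free and $E$ is disjoint from $\Bs|5L-K_S|$) and that $|Q_1| = |(5a+2)A|$ has exactly one simple base point $c_0\in C$ — which one can do by forcing $(5a+2)\deg A = 2g_C-2$ and $(5a+2)A \sim K_C - y_0 + x_0$ for general distinct points $x_0,y_0$, a short Riemann--Roch computation then giving $\Bs|(5a+2)A| = \{x_0\}$ — so that $\Bs|5L-K_S|$ is the single reduced point of $E_\infty$ over $c_0 = x_0$, which does not lie on $E$.

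The real obstacle is reconciling all these numerical constraints at once: (\ref{prp:surface}.\ref{seq:surface 2}) wants $L$, and hence $5L-K_S$, barely positive so that a special line bundle of degree $\le 2g_C-2$ survives among the summands on $C$; (\ref{prp:surface}.\ref{seq:surface 3}) wants $5L-K_S$ nef but not ample, with a one-point base locus avoiding $E$; and (\ref{prp:surface}.\ref{seq:surface 1}) forbids that special line bundle from reappearing after $E$ is subtracted. Threading this needle, together with the divisibility conditions forced by the linear equivalences above, is what determines the precise choice of $g_C$, of $\deg A$ and $\deg P$, and ultimately of the curve $C$ itself (which may need special geometry rather than being general) and of the divisors $A$, $P$ on it; granting such a choice, the cohomological identities collected above are routine.
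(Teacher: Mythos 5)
Your framework is exactly the paper's: a split ruled surface $\pi\colon S=\P_C(\O_C\oplus\O_C(-A))\to C$ as in Notation~\ref{ntn:split ruled surface} with $L=aE+\pi^*P$ (the paper takes $a=1$), the projection formula of Proposition~\ref{prp:div ruled} reducing everything to line bundles on $C$, and the non-vanishing in (\ref{prp:surface}.\ref{seq:surface 2}) produced by the one summand $\O_C(4(P-aA))$ present in $\pi_*\O_S(4L)$ but absent from $\pi_*\O_S(4L-E)$, arranged to be special. The genuine gap is that the Proposition is an existence statement and you never exhibit $C$, $A$, $P$: you reduce it to a system of degree conditions and linear equivalences on an unspecified curve and defer its solvability, which is precisely the hard half. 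The paper's content is the explicit solution: $C$ hyperelliptic of genus $7$, $A=R_1$ a Weierstrass point (so $2A\sim g^1_2$), $P=H=2g^1_2$, whence $4(H-A)\sim 6g^1_2\sim K_C$ on the nose and every relevant line bundle is a controlled multiple of $g^1_2$. Moreover, your system is strictly tighter than necessary: to get nefness of $5L-K_S$ you impose $\deg Q_1=(5a+2)\deg A$ together with $(5a+2)\deg A=2g-2$, whereas in the paper $\deg Q_1=9>7=(5a+2)\deg A$ (the divisor is in fact ample, and nefness already follows from the base locus being finite). Chasing your constraints with $Z$ effective of degree $z<\deg A$ forces $5z=(5a-2)\deg A$, hence $a=1$, $5\mid\deg A$ and $\deg A$ even, so $g\ge 36$, plus several simultaneous linear equivalences ($K_C\sim 4A+5Z$, $3A\sim 5Z+x_0-y_0$, etc.) whose joint realizability on an actual curve is not addressed and is not routine.

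A second, smaller gap concerns reducedness of the base point. Your Riemann--Roch computation shows that $\Bs|Q_1|$ is a reduced point $x_0$ of $C$, but this does not yet give reducedness of the scheme-theoretic base locus of $|5L-K_S|$ on $S$: the two members you exhibit, namely $(5a+2)E_\infty$ from the top summand and $\pi^*D+(5a+2)E$ from the bottom one, meet at $b$ in a non-reduced scheme (multiplicity $5a+2$ along the fibre direction). As in Step~6 of the paper's proof you need two members that are each smooth at $b$ with distinct tangent directions, e.g.\ one from the $k=0$ summand and one from the $k=1$ summand; the latter requires in addition that $x_0$ is not a base point of $|Q_1-A|$, which is one more condition to feed into the already overdetermined system.
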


\begin{prp}[Canonically polarized example]\label{prp:general type}
  There is a smooth projective surface $T$, a smooth (not necessarily irreducible)
  curve $F \subset T$, and an ample divisor $M$ on $T$, such that the following hold.
  \begin{enumerate}
  \item\label{seq:general type 1} For all $i, n \ge 1$, it holds that $h^i(T, nM - F)
    = 0$.
  \item\label{seq:general type 2} We have $h^1(T, M) \ne 0$.
  \item\label{seq:general type 3} The surface $T$ is canonically polarized. More
    precisely, $4K_T \sim 5M$.
  \end{enumerate}
\end{prp}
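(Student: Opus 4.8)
The plan is to build $(T,F,M)$ from the ruled-surface data $(S,E,L)$ of Proposition~\ref{prp:surface} together with a smooth projective curve $B$ of genus $2$ carrying a theta characteristic $\Theta$ as in Lemma~\ref{lem:genus 2}. Using the exterior-product notation of \ref{ntn:exterior-tensor-product}, I would let $T\subset S\times B$ be a general member of the linear system $|(5L-K_S,\,3\Theta)|$, and then set $F:=T\cap\pr_1^*E$ and $M:=(4L,\,4\Theta)|_T$. These choices are essentially forced by property (\ref{prp:general type}.\ref{seq:general type 3}): adjunction on $S\times B$ together with $K_B\sim 2\Theta$ (Lemma~\ref{lem:genus 2}.\ref{seq:genus 2.2}) gives
\[
K_T=(K_{S\times B}+T)|_T=\big(K_S+(5L-K_S),\, K_B+3\Theta\big)|_T=(5L,\,5\Theta)|_T ,
\]
so $4K_T=(20L,20\Theta)|_T\sim 5M$; and since $L$ is ample on $S$ and $\deg_B\Theta=1>0$, both $(5L,5\Theta)$ and $(4L,4\Theta)$ are ample on $S\times B$, hence $T$ is canonically polarized and $M$ is ample. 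The use of a genus-$2$ theta characteristic with $h^0=h^1=0$ is dictated by the cohomology computation below.

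First I would check that $T$ is smooth. The scheme-theoretic base locus of $|5L-K_S|$ is a single reduced point $b\notin E$ by (\ref{prp:surface}.\ref{seq:surface 3}), and $|3\Theta|$ is basepoint-free by (\ref{lem:genus 2}.\ref{seq:genus 2.3}); hence Proposition~\ref{prp:lin sys product} applies and a general $T$ is smooth. As $(5L-K_S,3\Theta)=\pr_1^*(\text{big and nef})+\pr_2^*(\text{ample})$ is itself big and nef on $S\times B$ with positive top self-intersection, Lemma~\ref{lem:big nef conn} shows $T$ is connected, so $T$ is an irreducible smooth projective surface. For $F$: since $b\notin E$, the system $|(5L-K_S,3\Theta)|$ restricts to a basepoint-free system on $E\times B$, which is non-empty because $E\not\subseteq\Bs|5L-K_S|=\{b\}$; thus for a general $T$ the divisor $F=T\cap(E\times B)$ is a non-empty smooth (possibly reducible) curve, and in particular $E\times B\not\subseteq T$. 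All of these are open dense conditions on $T$, so a single general $T$ does everything at once.

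It remains to verify (\ref{prp:general type}.\ref{seq:general type 1}) and (\ref{prp:general type}.\ref{seq:general type 2}). Both follow by feeding the relevant line bundle on $T$ into the restriction sequence $0\to\sG(-T)\to\sG\to\sG|_T\to 0$ on $S\times B$ and computing the ambient cohomology by the Künneth formula. For (\ref{prp:general type}.\ref{seq:general type 1}) one has $\sO_T(nM-F)=\sG|_T$ with $\sG=\sO_{S\times B}(4nL-E,\,4n\Theta)$, so that $\sG(-T)=\sO_{S\times B}\big(K_S+(4n-5)L-E,\,(4n-3)\Theta\big)$. By (\ref{prp:surface}.\ref{seq:surface 1}) the group $H^\bullet(S,4nL-E)$ sits in degree $0$ for $n\ge 1$, and by (\ref{lem:genus 2}.\ref{seq:genus 2.4}) so does $H^\bullet(B,4n\Theta)$; hence $H^i(S\times B,\sG)=0$ for $i\ge 1$, and the long exact sequence yields $H^i(T,\sG|_T)\cong H^{i+1}(S\times B,\sG(-T))$ for $i\ge 1$. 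Now $H^\bullet(B,(4n-3)\Theta)$ vanishes entirely when $n=1$ by (\ref{lem:genus 2}.\ref{seq:genus 2.1}) and sits in degree $0$ when $n\ge 2$ by (\ref{lem:genus 2}.\ref{seq:genus 2.4}), and in the latter case $H^2(S,K_S+(4n-5)L-E)=0$ by (\ref{prp:surface}.\ref{seq:surface 5}); combining these with the fact that $S$ is a surface, $H^j(S\times B,\sG(-T))=0$ for $j\ge 2$, so $h^i(T,nM-F)=0$ for all $i,n\ge 1$. For (\ref{prp:general type}.\ref{seq:general type 2}), take $\sH=\sO_{S\times B}(4L,\,4\Theta)$, so $\sH(-T)=\sO_{S\times B}(K_S-L,\,\Theta)$; since $H^\bullet(B,\Theta)=0$ by (\ref{lem:genus 2}.\ref{seq:genus 2.1}), the sequence gives an injection $H^1(S\times B,\sH)\hookrightarrow H^1(T,M)$, while by Künneth together with (\ref{prp:surface}.\ref{seq:surface 2}) and (\ref{lem:genus 2}.\ref{seq:genus 2.4}),
\[
H^1(S\times B,\sH)=H^1(S,4L)\tensor_\C H^0(B,4\Theta)\ne 0 ,
\]
so $h^1(T,M)\ne 0$.

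I expect the genuine obstacle to be the smoothness of $T$: the system $|(5L-K_S,3\Theta)|$ has non-empty base locus $\{b\}\times B$, so an ordinary Bertini argument fails, and this is exactly why Proposition~\ref{prp:lin sys product} (built on the blowup trick of Lemma~\ref{lem:image smooth}) was set up in advance. The remaining subtlety is that Proposition~\ref{prp:surface} provides no vanishing for $H^1(S,K_S+(4n-5)L-E)$; this forces the split into $n=1$ and $n\ge 2$ in the proof of (\ref{prp:general type}.\ref{seq:general type 1}), with the case $n=1$ rescued precisely by $h^0(B,\Theta)=h^1(B,\Theta)=0$.
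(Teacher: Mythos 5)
Your proposal is correct and follows essentially the same route as the paper: the same construction $T \in |(5L-K_S, 3\Theta)|$ with $F = (\pr_1^*E)|_T$ and $M = (4L,4\Theta)|_T$, smoothness via Proposition~\ref{prp:lin sys product}, and the same restriction-sequence/K\"unneth computations, including the split into $n=1$ and $n\ge 2$ rescued by $h^0(B,\Theta)=h^1(B,\Theta)=0$. No gaps.
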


\subsection{Proof of Proposition~\ref{prp:surface}}

Since this proof is somewhat lengthy, it is divided into 7 steps.

\subsubsection*{Step 1: A hyperelliptic curve}

Let $C$ be a hyperelliptic curve of genus $g = 7$. By \cite[Ch.~IV,
Prop.~5.3]{Har77}, there exists on $C$ a unique $g^1_2$, that is, a linear system of
dimension $1$ and degree $2$.  The linear system $g^1_2$ defines a two-to-one cover
$f\!: C \to \P^1$, ramified at exactly $16$ points $R_1, \dots, R_{16}$.

Let $H \sim 2g^1_2$, and let $A = R_1$ be one of the ramification points of $f$.
Note that $2A \sim g^1_2$, and that
\begin{sequation}\label{eq:4(H-A)} 
  4(H - A) \sim 6g^1_2 \sim K_C
\end{sequation}%
is a canonical divisor on $C$ by \cite[Ch.~IV, Prop.~5.3]{Har77}.

We need to calculate $f_* \O_C$. Since a torsion-free sheaf on a smooth curve is
locally free, and since the injection $\O_{\P^1} \inj f_* \O_C$ is split by the trace
map, we must have $f_* \O_C = \O_{\P^1} \oplus \O_{\P^1}(n)$ for some integer $n$. To
calculate $n$, observe that
\[ 
H^1(C, f^* \O_{\P^1}(6)) = H^1(\P^1, f_* f^* \O_{\P^1}(6)) = H^1(\P^1, \O_{\P^1}(6)
\oplus \O_{\P^1}(n + 6)) 
\] 
by the Leray spectral sequence and the projection formula. Using~\eqref{eq:4(H-A)},
we see that the left-hand side is one-dimensional.  So $n + 6 = -2$, and
\begin{sequation}\label{eqn:f_* O_C}
  f_* \O_C = \O_{\P^1} \oplus \O_{\P^1}(-8).
\end{sequation}%

\subsubsection*{Step 2: Construction and properties of $S$}

Let $\pi\!: S = \P_C(\O_C \oplus \O_C(-A)) \to C$ and let $\sO_S(1)$ denote the
associated relatively ample line bundle. Take $E \subset S$ to be the section of
$\pi$ corresponding to the surjection $\O_C \oplus \O_C(-A) \surj \O_C(-A)$, and take
$E_\infty \subset S$ to be the section corresponding to the projection onto the
first summand $\O_C \oplus \O_C(-A) \surj \O_C$.  We will identify divisors on $E$,
on $E_\infty$, and on $C$. Notice that $E$ is contained in the linear system
$|\O_S(1)|$, that $\pi_*\sO_S(E)\simeq \sO_C\oplus\sO_C(-A)$, and that $\O_S(E)|_E =
\O_E(-A)$.  See also Notation~\ref{ntn:split ruled surface}.

\subsubsection*{Step 3: Definition and ampleness of $L$}

Let $L = \pi^* H + E$. We use the Nakai-Moishezon criterion to show that $L$ is
ample.  Let $D \subset S$ be an irreducible curve. If $D = E$ or $D$ is a fibre of
$\pi$, then $L \cdot D = 3$ or $1$, respectively, hence we may assume that neither is
the case. Then $\pi^* H \cdot D > 0$ and $E \cdot D \ge 0$, so $L \cdot D > 0$ again.
Finally observe that $L^2 = 7 > 0$. So $L$ is ample.

\subsubsection*{Step 4: Proof of~(\ref{prp:surface}.\ref{seq:surface 1})}

Let $i, n \ge 1$. Since $4nL - E = \pi^*(4nH) + (4n - 1)E$, this divisor intersects
the fibres of $\pi$ non-negatively, so $R^1\pi_* \O_S(4nL - E)=0$ and we obtain
\[ 
H^i(S, \sO_S(4nL - E)) \isom H^i(C, \pi_* \O_S(4nL - E)). 
\] 
This already proves~(\ref{prp:surface}.\ref{seq:surface 1}) if $i \ge 2$, hence we
may assume that $i = 1$.  Using the identification $\sO_S(E)\simeq \sO_S(1)$ we have
\[ 
\pi_* \O_S(4nL - E) = \O_C(4nH) \tensor \pi_* \O_S(4n - 1) = \bigoplus_{k=0}^{4n-1}
\O_C(4nH - kA).
\] 
For $n, k$ in the relevant range, the degree of $4nH - kA$ is at least $16n - (4n -
1) = 12n + 1 \ge 13 > \deg K_C = 12$.  So we have $h^1(C, 4nH - kA) = 0$ by Serre
duality. This proves~(\ref{prp:surface}.\ref{seq:surface 1}).

\subsubsection*{Step 5: Proof of~(\ref{prp:surface}.\ref{seq:surface 2})}

Consider the short exact sequence
\[ 
0 \to \O_S(4L - E) \to \O_S(4L) \to \O_E(4L|_E) \to 0 
\] 
and its associated long exact sequence. In view
of~(\ref{prp:surface}.\ref{seq:surface 1}), we obtain an isomorphism
\[
H^1(S, \sO_S(4L)) \isom H^1(E, \sO_E(4L|_E)).
\] 
On the other hand $4L|_E \sim 4(H - A) \sim K_E$ by \eqref{eq:4(H-A)}.  Since $h^1(E,
K_E)=1$, (\ref{prp:surface}.\ref{seq:surface 2}) follows.

\subsubsection*{Step 6: Proof of~(\ref{prp:surface}.\ref{seq:surface 3})}

We have
\begin{equation*}
  \begin{array}{lcl}
    5L - K_S &=   & \pi^*(5H) + 5E - ( -2E + \pi^*(K_C - A) ) \\
             &=   & \pi^*(5H + A - K_C) + 7E \\
             &\sim& \pi^*(4g^1_2 + R_1) + 7E.
  \end{array}
\end{equation*}
By the projection formula, we obtain an isomorphism
\[ 
H^0(S, \sO_S(5L - K_S)) = \bigoplus_{k=0}^7 H^0(C, \sO_C(4g^1_2 + (1 - k)R_1)).
\]
In order to
prove the claim about the base locus, we will repeatedly apply
Proposition~\ref{prp:div ruled} to produce sufficiently many divisors in $|5L -
K_S|$.

First, since $|4g^1_2 + R_1|$ has no basepoint outside $R_1$, taking $k = 0$ we see
that the base locus of $|5L - K_S|$ is contained in $\pi^{-1}(R_1) \cup E$.  On the
other hand, since $|4g^1_2 - 6R_1| = |g^1_2|$ is basepoint-free, taking $k = 7$ shows
that the base locus in question is contained in $E_\infty$.  Hence the only basepoint
of $|5L - K_S|$ can be at the intersection $\pi^{-1}(R_1) \cap E_\infty = \{ b \}$.
In particular, $5L - K_S$ is nef. Calculating that $(5L - K_S)^2 = 77 > 0$, shows
that $5L - K_S$ is also big.

In order to see that $\Bs |5L - K_S|$ is reduced, we will exhibit two members of $|5L
- K_S|$ smooth at $b$, with different tangent directions.  To this end, note that
$|4g^1_2|$ is basepoint-free, so taking $k = 1$ gives us a member of $|5L - K_S|$
which is smooth at $b$, with tangent space equal to $T_b E_\infty \subset T_b S$. But
we have already seen that $k = 0$ gives a member of $|5L - K_S|$ smooth at $b$, with
tangent space equal to $T_b (\pi^{-1}(R_1)) \subset T_b S$.

It remains to show that $b$ really is a basepoint of $|5L - K_S|$.  Because $(5L -
K_S)|_{E_\infty} \sim 4g^1_2 + R_1$, it is enough to show that $R_1$ is a basepoint
of $|4g^1_2 + R_1|$.  Note that $h^0(C, 4g^1_2) = 5$ and $h^0(C, 5g^1_2) = 6$ by the
projection formula and~\eqref{eqn:f_* O_C}. Since $|5g^1_2|$ is basepoint-free, we
have $h^0(C, 4g^1_2 + R_1) = h^0(C, 5g^1_2 - R_1) = 5$. So $h^0(C, 4g^1_2) = h^0(C,
4g^1_2 + R_1)$. This shows the claim and finishes the proof
of~(\ref{prp:surface}.\ref{seq:surface 3}).

\subsubsection*{Step 7: Proof of~(\ref{prp:surface}.\ref{seq:surface 5})}

Let $n \ge 2$. By Serre duality,
\[ h^2(S, K_S + (4n - 5)L - E) = h^0(S, -(4n - 5)L + E). \]
The right-hand side vanishes since $-(4n - 5)L + E$ intersects a fiber of $\pi$ negatively.
This proves~(\ref{prp:surface}.\ref{seq:surface 5}) and also finishes the
proof of Proposition~\ref{prp:surface}.

\subsection{Proof of Proposition~\ref{prp:general type}}

Again, we divide the proof into 5 steps.

\subsubsection*{Step 1: Construction of $T$}

Let $S$, $E$, and $L$ be as in Proposition~\ref{prp:surface}, and let $B$ and
$\Theta$ be as in Lemma~\ref{lem:genus 2}. Consider the product $X = S \x B$ with
projections $\pr_1$ and $\pr_2$. We will utilize the notation from
\eqref{ntn:exterior-tensor-product}. Let $T \subset X$ be a general element of the
linear system $|(5L - K_S, 3\Theta)|$.  Let $F$ be the divisor $(\pr_1^*E)|_T$, and
let $M = (4L, 4\Theta)|_T$.  Clearly $M$ is ample.

\subsubsection*{Step 2: $T$ is smooth}

We have $\Bs |5L - K_S| = \{ b \}$ scheme-theoretically, and $\Bs |3\Theta| = \emptyset$. So by Proposition~\ref{prp:lin sys product}, $T$ is
smooth. Also note that $T$ is connected by Lemma \ref{lem:big nef conn}.

Since $b \not\in E$, the restricted linear system $|(5L - K_S, 3\Theta)|_{\pr_1^{-1}(E)}$ is basepoint-free. Hence $F \subset T$ is a smooth curve.

\subsubsection*{Step 3: Proof of~(\ref{prp:general type}.\ref{seq:general type 1})}

Consider the ideal sheaf sequence of $T \subset X$,
\begin{equation*}
0 \to \O_X(K_S - 5L, -3\Theta) \to \O_X \to \O_T \to 0,
\end{equation*}%
and twist it by $(4nL - E, 4n\Theta)$, yielding
\begin{sequation}\label{seq:nM-F} 
  0 \to \O_X(K_S + (4n-5)L - E, (4n-3)\Theta) \to \O_X(4nL - E, 4n\Theta) \to \O_T(nM
  - F) \to 0.
\end{sequation}%
When calculating the cohomology groups of an exterior tensor product using the
K\"unneth formula, we will always drop any summands where at least one factor
vanishes simply for dimension reasons.  Concerning the sheaf in the middle, we have
\begin{equation*}
  h^i(\O_X(4nL - E, 4n\Theta)) = h^{i-1}(S, 4nL - E) \cdot \underbrace{ h^1(B,
    4n\Theta) }_{\text{$= 0$, (\ref{lem:genus 2}.\ref{seq:genus 2.4})}} 
  + \underbrace{ h^i(S, 4nL - E) }_{\text{$= 0$, (\ref{prp:surface}.\ref{seq:surface
        1})}} \cdot h^0(B, 4n\Theta) 
\end{equation*}
for all $i \ge 1$, and for the sheaf on the left-hand side,
\begin{comment}
\begin{array}{ll}
  & h^i(\O_X(K_S + (4n-5)L - E, (4n-3)\Theta)) \\
  = & h^{i-1}(S, K_S + (4n-5)L - E) \cdot \underbrace{ h^1(B, (4n-3)\Theta)
  }_{\text{$= 0$, (\ref{lem:genus 2}.\ref{seq:genus 2.1}/\ref{seq:genus 2.4})}} \\ 
  + & \underbrace{ h^i(S, K_S + (4n-5)L - E) }_{\text{$= 0$ if $n \ge 2$,
      (\ref{prp:surface}.\ref{seq:surface 5})}} \cdot \underbrace{ h^0(B,
    (4n-3)\Theta) }_{\text{$=0$ if $n=1$, (\ref{lem:genus 2}.\ref{seq:genus 2.1})}}  
\end{array}
\end{comment}
\begin{multline*}
  h^i(\O_X(K_S + (4n-5)L - E, (4n-3)\Theta)) = \\
  = h^{i-1}(S, K_S + (4n-5)L - E) \cdot \underbrace{ h^1(B, (4n-3)\Theta)
  }_{\text{$= 0$, (\ref{lem:genus 2}.\ref{seq:genus 2.1}/\ref{seq:genus 2.4})}} + \\
  + \underbrace{ h^i(S, K_S + (4n-5)L - E) }_{\text{$= 0$ if $n \ge 2$,
      (\ref{prp:surface}.\ref{seq:surface 5})}} \cdot \underbrace{ h^0(B,
    (4n-3)\Theta) }_{\text{$=0$ if $n=1$, (\ref{lem:genus 2}.\ref{seq:genus 2.1})}}
\end{multline*}
for $i \ge 2$. So taking cohomology of \eqref{seq:nM-F} proves (\ref{prp:general
  type}.\ref{seq:general type 1}).

\subsubsection*{Step 4: Proof of~(\ref{prp:general type}.\ref{seq:general type 2})}
Twist the ideal sheaf sequence of $T \subset X$ by $(4L, 4\Theta)$, which gives
\begin{sequation}\label{seq:M}
  0 \to \O_X(K_S - L, \Theta) \to \O_X(4L, 4\Theta) \to \O_T(M) \to 0.
\end{sequation}%
We have

\begin{equation*}
  h^1(\O_X(K_S - L, \Theta)) = h^0(S, K_S - L) \cdot \underbrace{ h^1(B, \Theta)
  }_{\text{$= 0$, (\ref{lem:genus 2}.\ref{seq:genus 2.1})}} 
  + \; h^1(S, K_S - L) \cdot \underbrace{ h^0(B, \Theta). }_{\text{$= 0$,
      (\ref{lem:genus 2}.\ref{seq:genus 2.1})}} 
\end{equation*}
Furthermore,
\begin{equation*}
  h^1(\O_X(4L, 4\Theta)) \ge
  \underbrace{ h^1(S, 4L) }_{\text{$\ne 0$, (\ref{prp:surface}.\ref{seq:surface 2})}}
  \cdot \; 
  \underbrace{ h^0(B, 4\Theta), }_{\text{$\ne 0$, (\ref{lem:genus 2}.\ref{seq:genus
        2.4})}} 
\end{equation*}
so taking cohomology of \eqref{seq:M} gives $h^1(T, M) \ne 0$, proving
(\ref{prp:general type}.\ref{seq:general type 2}).

\subsubsection*{Step 5: Proof of~(\ref{prp:general type}.\ref{seq:general type 3})}

Since $3\Theta = 5\Theta - K_B$, by the adjunction formula we have
\[ K_T = (K_X + T)|_T = (5L, 5\Theta)|_T. \] Claim (\ref{prp:general
  type}.\ref{seq:general type 3}) follows immediately.  This finishes the proof of
Proposition \ref{prp:general type}.

\subsection{Proof of Theorem~\ref{thm:non-DB ambient}}

Let $T$, $F$, and $M$ be as in Proposition~\ref{prp:general type}. By
Theorem~\ref{thm:cone DB criterion}, $(X, \Sigma) = C_a(T, F, \O_T(M))$ is a Du~Bois
pair, but $(X, \emptyset) = C_a(T, \emptyset, \O_T(M))$ is not. This means that $X$
is not Du~Bois. By \cite[Prop.~3.14]{SingBook}, $4K_X$ is a Cartier divisor and this
is the smallest multiple of $K_X$ which is Cartier.  It is clear by construction that
$X$ is normal, of dimension three, and has an isolated singularity.

\section{Proof of Corollary~\ref{cor:LZ pdb}}

Let $X$ be a variety satisfying the assumption of the Lipman-Zariski conjecture. By
\cite[Thm.~3]{Lip65}, $X$ is normal.  Corollary~\ref{cor:LZ pdb} now follows
immediately from Theorem~\ref{thm:Ext of p-forms on DB pairs}
and~\cite[Thm.~1.2]{GK13}.

Alternatively, we may also argue as follows: If $X$ is \pdb and satisfies the
assumption of the Lipman-Zariski conjecture, then $X$ is log canonical by
Theorem~\ref{thm:pot DB}.\ref{itm:pot DB.2}. By~\cite[Thm.~1.1]{Dru13}, $X$ is
smooth.

% \bibliographystyle{skalpha}
% \bibliography{20_Literatur}

% \newcommand{\etalchar}[1]{$^{#1}$}
% \begin{thebibliography}{GKKP11}

\providecommand{\bysame}{\leavevmode\hbox to3em{\hrulefill}\thinspace}
\providecommand{\MR}{\relax\ifhmode\unskip\space\fi MR}
% \MRhref is called by the amsart/book/proc definition of \MR.
\providecommand{\MRhref}[2]{%
  \href{http://www.ams.org/mathscinet-getitem?mr=#1}{#2}
}
\providecommand{\href}[2]{#2}

\end{document}